    \newtheorem{teo}{Theorem}[section]
    \newtheorem*{teo*}{Theorem}
    \newtheorem{lem}[teo]{Lemma}
    \newtheorem*{cor}{Corollary}
    \theoremstyle{definition}
    \newtheorem*{defn}{Definition}
    \theoremstyle{remark}
    \newtheorem{rem}[teo]{Remark}
    \newcommand{\FF}{\mathbb{F}}
    \newcommand{\QQ}{\mathbb{Q}}
    \newcommand{\ZZ}{\mathbb{Z}}
    \newcommand{\NN}{\mathbb{N}}
    \newcommand{\RR}{\mathbb{R}}
    \newcommand{\CC}{\mathbb{C}}
    \newcommand{\PP}{\mathbb{P}}
    \newcommand{\C}{\mathcal{C}}
    \newcommand{\jac}[1]{\mathcal{J}_{#1}}
    \newcommand{\heltal}[1]{\mathfrak{O}_{#1}}
    \newcommand{\grp}[1]{\langle #1 \rangle}
    \DeclareMathOperator{\End}{End}
    \DeclareMathOperator{\Div}{Div}
    \DeclareMathOperator{\Mat}{Mat}
    \DeclareMathOperator{\divisor}{div}
    \DeclareMathOperator{\diag}{diag}
    \DeclareMathOperator{\tatemodul}{T}
\begin{document}

\title[Pairings on Jacobians]
{Pairings on Jacobians of Hyperelliptic Curves}

\author[C.R. Ravnshøj]{Christian Robenhagen Ravnshøj}

\address{Department of Mathematical Sciences \\
University of Aarhus \\
Ny Munkegade \\
Building 1530 \\
DK-8000 Aarhus C}

\email{cr@imf.au.dk}

\thanks{Research supported in part by a PhD grant from CRYPTOMAThIC}

\keywords{Jacobians of hyperelliptic curves of genus two, Frobenius endomorphism, pairings, embedding degree, complex
multiplication}

\subjclass[2000]{Primary 14H40; Secondary 11G15, 14Q05, 94A60}


\begin{abstract}
Consider the Jacobian of a hyperelliptic genus two curve defined over a finite field. Under certain restrictions on the
endomorphism ring of the Jacobian, we give an explicit description of all non-degenerate, bilinear, anti-symmetric and
Galois-invariant pairings on the Jacobian. From this description it follows that no such pairing can be computed more
efficiently than the Weil pairing.

To establish this result, we need an explicit description of the repre\-sentation of the Frobenius endomorphism on the
$\ell$-torsion subgroup of the Jacobian. This description is given. In particular, we show that if the characteristic
polynomial of the Frobenius endomorphism splits into linear factors modulo $\ell$, then the Frobenius is
diagonalizable.

Finally, under the restriction that the Frobenius element is an element of a certain subring of the endomorphism ring,
we prove that if the characteristic polynomial of the Frobenius endomorphism splits into linear factors modulo~$\ell$,
then the embedding degree and the total embedding degree of the Jacobian with respect to $\ell$ are the same number.
\end{abstract}

\maketitle

\section{Introduction}

In \cite{koblitz87}, Koblitz described how to use elliptic curves to construct a public key cryptosystem. To get a more
general class of groups, and possibly larger group orders, Koblitz \cite{koblitz89} then proposed using Jacobians of
hyperelliptic curves. Since Boney and Franklin \cite{boneh-franklin} proposed an identity based cryptosystem by using
the Weil pairing on an elliptic curve, pairings have been of great interest to cryptography~\cite{galbraith05}. The
next natural step then was to consider pairings on hyperelliptic curves. Galbraith \emph{et al}~\cite{galbraith07}
survey the recent research on pairings on hyperelliptic curves.

The pairing in question is usually the Weil or the Tate pairing; both pairings can be computed with Miller's
algorithm \cite{miller-algorithm}. The Tate pairing is usually preferred because  it can be computed more efficiently
than the Weil pairing, cf. \cite{galbraith01}, and it is non-degenerate over a possible smaller field extension than
the Weil pairing, cf. \cite{hess} and \cite{sil}. For elliptic curves, in most cases relevant to cryptography the
question of non-degeneracy is not an issue, cf.~\cite{balasubramanian}. This result has been generalized to any abelian
variety defined over a finite field by Rubin and Silverberg \cite[Theorem~3.1]{rubin}. The proof in \cite{rubin} uses
intrinsic properties of the Frobenius endomorphism on the abelian variety. This indicates the importance of knowing the
representation of the Frobenius endomorphism on torsion subgroups of the abelian variety. This representation has
implicitly been given by Rück \cite[proof of Lemma~4.2]{ruck}.

Cryptographically, it is essential to know the number of points on the Jacobian. Currently, the \emph{complex
multiplication method} \cite{weng03,gaudry,eisen-lauter} is the only efficient method to determine the number of points
of the Jacobian of a genus two curve defined over a large prime field \cite{gaudry}. The complex multiplication method
constructs a Jacobian with endomorphism ring isomorphic to the ring of integers $\heltal{K}$ in a \emph{quartic CM
field}~$K$, i.e. a totally imaginary, quadratic field extension of a quadratic number field. In the present paper we consider the more general situation where $\heltal{K}$ is \emph{embedded} into the endomorphism ring.

\subsection{Notation and assumptions}\label{sec:assumptions}

Consider a hyperelliptic curve $\C$ of genus two defined over a finite field~$\FF_q$ of
characteristic~$p$. We assume that the Jacobian $\jac{\C}$ of $\C$ is irreducible. Identify the $q$-power Frobenius
endo\-morphism~$\varphi$ on~$\jac{\C}$ with a root $\omega\in\CC$ of the characteristic polynomial~\mbox{$P\in\ZZ[X]$}
of~$\varphi$; cf. section~\ref{sec:RepFro}. We then assume that the ring of integers of $\QQ(\omega)$ is
\emph{embedded} into the endomorphism ring $\End(\jac{\C})$. Let $\ell\neq p$ be a prime number dividing the order
of~$\jac{\C}(\FF_q)$. Assume that $\ell$ is unramified in~$\QQ(\omega)$, and that $\ell\nmid q-1$.

\subsection{Results}

Under these assumptions, in section~\ref{sec:WeilMatrix} we give an explicit description of all non-degenerate, bilinear,
anti-symmetric, Galois-invariant pairings on the $\ell$\nobreakdash-torsion subgroup of the Jacobian of a hyperelliptic curve of
genus two, given by the following theorem.

\setcounter{section}{5} \setcounter{teo}{0}

\begin{teo}[Anti-symmetric pairings]\label{introteo:weil}
Let notation and assumptions be as above. Choose a basis $\mathcal{B}$ of $\jac{\C}[\ell]$, such that $\varphi$ is
represented either by a diagonal matrix or a matrix on the form given in theorem~\ref{teo:matrixrep} with respect to
$\mathcal{B}$. If $\jac{\C}(\FF_q)[\ell]$ is cyclic, then all non-degenerate, bilinear, anti-symmetric and
Galois-invariant pairings on $\jac{\C}[\ell]$ are given by the matrices
    $$\mathcal{E}_{a,b}=\begin{bmatrix}
            0 & a & 0 & 0 \\
            -a & 0 & 0 & 0 \\
            0 & 0 & 0 & b \\
            0 & 0 & -b & 0
            \end{bmatrix},\qquad a,b\in\FF_\ell^\times
    $$
with respect to $\mathcal{B}$.
\end{teo}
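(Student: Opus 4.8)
The plan is to reduce the classification to a linear-algebra problem over $\FF_\ell$. Fix a generator $\zeta$ of $\mu_\ell\subset\overline{\FF}_q$; then every bilinear pairing $e\colon\jac{\C}[\ell]\times\jac{\C}[\ell]\to\mu_\ell$ has the form $e(x,y)=\zeta^{x^\top E y}$ for a unique matrix $E\in\Mat_4(\FF_\ell)$, coordinates being taken with respect to $\mathcal B$. The pairing $e$ is anti-symmetric precisely when $E^\top=-E$, and since $\ell\neq p$ and $\ell\nmid q-1$ we have $\ell\neq 2$, so this is the same as $E$ being alternating. Non-degeneracy of $e$ amounts to $\det E\neq0$, and Galois-invariance, i.e. $e(\varphi x,\varphi y)=e(x,y)^q$, translates into the matrix identity $\varphi^\top E\varphi=qE$ over $\FF_\ell$, where I write $\varphi$ also for the matrix of the Frobenius in the basis $\mathcal B$. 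Thus the task is to find all invertible alternating $E$ satisfying $\varphi^\top E\varphi=qE$.

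The next step is to pin down $\varphi$ using the cyclicity hypothesis together with the results of section~\ref{sec:RepFro}. Since $\ell\mid\#\jac{\C}(\FF_q)=P(1)$, the element $1$ is a root of $P$ modulo $\ell$; by the functional equation satisfied by $P$ the roots of $P\bmod\ell$ are permuted by $\lambda\mapsto q/\lambda$ and have product $q^2$, so $q$ is a root as well, and $q\neq1$ because $\ell\nmid q-1$. The Frobenius-fixed part of $\jac{\C}[\ell]$ is exactly $\jac{\C}(\FF_q)[\ell]$, which the hypothesis forces to be one-dimensional; combined with theorem~\ref{teo:matrixrep} — in particular with the fact that $\varphi$ is diagonalizable whenever $P$ splits into linear factors mod $\ell$ — this shows that $1$, and hence (by the product relation) also $q$, is a simple root of $P\bmod\ell$. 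Consequently the basis $\mathcal B$ may be chosen so that $\varphi$ is either the diagonal matrix $\diag(1,q,\lambda_3,\lambda_4)$ with $\lambda_3\lambda_4=q$ and $\lambda_3,\lambda_4\notin\{1,q\}$, or the block matrix $\diag(1,q)\oplus M$ of theorem~\ref{teo:matrixrep}, where $M\in\Mat_2(\FF_\ell)$ has irreducible characteristic polynomial and $\det M=q$; in both cases the first two basis vectors span the eigenlines for the eigenvalues $1$ and $q$.

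It remains to solve $\varphi^\top E\varphi=qE$ in each case. In the diagonal case the identity reads $\lambda_i\lambda_j E_{ij}=qE_{ij}$ entrywise, so $E_{ij}=0$ unless $\lambda_i\lambda_j=q$; among $1,q,\lambda_3,\lambda_4$ the only unordered index pairs with product $q$ are $\{1,2\}$ and $\{3,4\}$ (a further coincidence would make $1$ or $q$ a double root), whence $E=\mathcal{E}_{a,b}$ with $a=E_{12}$ and $b=E_{34}$, and $\det E=a^2b^2\neq0$ gives $a,b\in\FF_\ell^\times$. In the block case, write $E=\left[\begin{smallmatrix}A&B\\-B^\top&C\end{smallmatrix}\right]$ with $A,C$ alternating $2\times2$ blocks; then $\varphi^\top E\varphi=qE$ decomposes into $\diag(1,q)\,A\,\diag(1,q)=qA$ (automatic), $\diag(1,q)\,B\,M=qB$, and $M^\top C M=qC$. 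Since $C$ is a scalar multiple of $\left[\begin{smallmatrix}0&1\\-1&0\end{smallmatrix}\right]$ and $M^\top\left[\begin{smallmatrix}0&1\\-1&0\end{smallmatrix}\right]M=(\det M)\left[\begin{smallmatrix}0&1\\-1&0\end{smallmatrix}\right]=q\left[\begin{smallmatrix}0&1\\-1&0\end{smallmatrix}\right]$, the $C$-equation holds automatically; in the $B$-equation, reading $B$ row by row shows its first row to be a left $q$-eigenvector and its second row a left $1$-eigenvector of $M$, both of which must vanish because $M$ has no eigenvalue in $\FF_\ell$, so $B=0$. Hence again $E=\mathcal{E}_{a,b}$ with $a,b\in\FF_\ell^\times$, and conversely each $\mathcal{E}_{a,b}$ evidently satisfies all four requirements. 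I expect the main obstacle to be the bookkeeping in the non-diagonalizable case: one must make sure that $M$ genuinely has no $\FF_\ell$-rational eigenvalue (so that $B$ dies) and that $\det M=q$ (so that the $C$-block is not killed — otherwise even the Weil pairing would be excluded); getting the Galois condition in the precise shape $\varphi^\top E\varphi=qE$, and not with $q$ replaced by $1$, is also what ties the admissible $E$ to the eigenvalue pairing $\lambda\leftrightarrow q/\lambda$.
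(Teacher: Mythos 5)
Your proposal is correct and follows essentially the same route as the paper: translate anti-symmetry, non-degeneracy and Galois-invariance into the matrix equation $M^T\mathcal{E}M=q\mathcal{E}$ for an alternating invertible $\mathcal{E}$, then solve it case by case for the two forms of $M$ (diagonal, or the form of theorem~\ref{teo:matrixrep}), using cyclicity of $\jac{\C}(\FF_q)[\ell]$ to rule out the eigenvalue coincidences $\alpha\equiv 1,q\pmod\ell$. Your block-decomposition/eigenvector bookkeeping and your appeal to the splitting criterion (which is theorem~\ref{teo:diagonalrep}, not theorem~\ref{teo:matrixrep}) to get simplicity of the roots $1$ and $q$ and irreducibility of the quadratic block are just slightly more systematic versions of the paper's entrywise computation, where the condition $c\not\equiv q+1\pmod\ell$ plays the same role.
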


This result implies that \emph{the Weil pairing is non-degenerate on the same field extension as the Tate pairing}, and
that \emph{no non-degenerate, bilinear, anti-symmetric and Galois-invariant pairing on $\jac{\C}[\ell]$ can be computed
more effectively than the Weil pairing}. To end the description of pairings on $\jac{\C}$, in
section~\ref{sec:TateMatrix} we give an explicit description of the Tate pairing.

The proof of Theorem~\ref{introteo:weil} uses an explicit description of the representation of the Frobenius
endomorphism on the Jacobian of a hyperelliptic curve of genus two, given by the following theorem.

\setcounter{section}{4} \setcounter{teo}{1}

\begin{teo}[Matrix representation]\label{teo:matrixrep}
Let notation and assumptions be as above. Then either $\varphi$ is diagonalizable on $\jac{\C}[\ell]$, or $\varphi$ is
represented on $\jac{\C}[\ell]$ by a matrix on the form
    $$M=\begin{bmatrix}
        1 & 0 & 0 & 0 \\
        0 & q & 0 & 0 \\
        0 & 0 & 0 & -q \\
        0 & 0 & 1 & c
        \end{bmatrix}
    $$
with $c\not\equiv q+1\pmod{\ell}$ with respect to an appropriate basis of $\jac{\C}[\ell]$.
\end{teo}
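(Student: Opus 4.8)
The plan is to derive the dichotomy from two ingredients: the shape of $\bar{P}:=P\bmod\ell$ that is forced by the Weil constraints on the characteristic polynomial $P$, and the semisimplicity of $\varphi$ acting on $V:=\jac{\C}[\ell]$ that is forced by the embedding of $\heltal{K}$ into $\End(\jac{\C})$, where $K=\QQ(\omega)$.

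First I would record that, since $\ell\neq p$, the space $V$ is a $4$-dimensional $\FF_\ell$-vector space on which $\varphi$ has characteristic polynomial $\bar{P}$. Because $\ell$ divides $\#\jac{\C}(\FF_q)=P(1)$, the element $1$ is a root of $\bar{P}$; because the roots of $P$ are permuted by $\omega\mapsto q/\omega$ (the functional equation $P(X)=q^{-2}X^{4}P(q/X)$) and $\ell\nmid q$, the element $q$ is a root of $\bar{P}$ as well. The hypothesis $\ell\nmid q-1$ gives $1\neq q$ in $\FF_\ell$, so $(X-1)(X-q)$ divides $\bar{P}$ in $\FF_\ell[X]$; comparing constant terms (both being $q^{2}$) shows the complementary monic quadratic factor has constant term $q$, i.e.
\[
\bar{P}(X)=(X-1)(X-q)\,h(X),\qquad h(X)=X^{2}-cX+q\in\FF_\ell[X]
\]
for some $c\in\FF_\ell$.

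Next I would show that $\varphi$ is semisimple on $V$. The embedding $\heltal{K}\hookrightarrow\End(\jac{\C})$ (under which $\omega$ corresponds to $\varphi$) turns $V$ into a module over $R:=\heltal{K}/\ell\heltal{K}$, and since $\ell$ is unramified in $K$ we have $R\cong\prod_{\got{l}\mid\ell}\heltal{K}/\got{l}$, a finite product of finite fields. Correspondingly $V=\bigoplus_{\got{l}}V_{\got{l}}$ with each $V_{\got{l}}$ a vector space over the field $\heltal{K}/\got{l}$, and on $V_{\got{l}}$ the operator $\varphi$ is multiplication by the scalar $\bar\omega_{\got{l}}:=\omega\bmod\got{l}$. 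Since $\FF_\ell$ is perfect, the minimal polynomial over $\FF_\ell$ of such a scalar multiplication (namely the minimal polynomial of $\bar\omega_{\got{l}}$) is separable, hence squarefree; taking the least common multiple over $\got{l}$ shows that the minimal polynomial of $\varphi$ on $V$ is squarefree, i.e.\ $\varphi$ is semisimple.

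Finally I would combine the two facts: $\varphi$ is semisimple on the $4$-dimensional space $V$ with characteristic polynomial $(X-1)(X-q)h(X)$. If $h$ splits over $\FF_\ell$ — into distinct linear factors, into a square, or with a factor equal to $X-1$ or $X-q$ — semisimplicity forces every eigenspace to have dimension equal to the algebraic multiplicity of its eigenvalue, so $\varphi$ is diagonalizable. If instead $h$ is irreducible over $\FF_\ell$, then $V$ splits $\FF_\ell[\varphi]$-equivariantly as $V_{1}\oplus V_{q}\oplus W$, where $V_{1},V_{q}$ are the one-dimensional eigenspaces for $1,q$ and $W\cong\FF_\ell[X]/(h)$ is a two-dimensional simple $\FF_\ell[\varphi]$-module. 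Choosing nonzero $e_{1}\in V_{1}$, $e_{2}\in V_{q}$, $e_{3}\in W$ and putting $e_{4}:=\varphi e_{3}$, the relation $h(\varphi)|_{W}=0$ yields $\varphi e_{4}=-q e_{3}+c e_{4}$, so $\varphi$ is represented by the matrix $M$ in the basis $(e_{1},e_{2},e_{3},e_{4})$; and the irreducibility of $h$ gives $h(1)=1-c+q\neq 0$, that is $c\not\equiv q+1\pmod{\ell}$.

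The step I expect to be the main obstacle is the semisimplicity claim: one must install the $\heltal{K}$-module structure on $\jac{\C}[\ell]$ using only that $\heltal{K}$ is \emph{embedded} in (rather than equal to) $\End(\jac{\C})$, and one must then check — using the irreducibility of $\jac{\C}$ and distinguishing $[K:\QQ]=4$ from $[K:\QQ]=2$ (where $P$ is a square and $\ell$ splits, so $\varphi$ acts as the scalars $1$ and $q$ on complementary subspaces) — that all the degenerate factorizations of $h$ really do land in the diagonalizable alternative.
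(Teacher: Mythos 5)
Your proposal is correct, but it proves the theorem by a genuinely different route than the paper does. You establish outright that $\varphi$ is \emph{semisimple} on $\jac{\C}[\ell]$: the embedding of $\heltal{\QQ(\omega)}$ into $\End(\jac{\C})$ makes $\jac{\C}[\ell]$ a module over $\heltal{\QQ(\omega)}/\ell\heltal{\QQ(\omega)}$, which is a product of finite fields precisely because $\ell$ is unramified, so the minimal polynomial of $\varphi$ on $\jac{\C}[\ell]$ is squarefree; combined with the forced factorization $\bar{P}(X)=(X-1)(X-q)(X^2-cX+q)$, pure linear algebra (eigenspace decomposition in the split case, a companion basis for the irreducible quadratic factor in the other case, where semisimplicity is not even needed) yields the dichotomy and $c\not\equiv q+1\pmod{\ell}$. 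The paper instead isolates only the weaker consequence of reducedness (Lemma~\ref{lem:unramified}: if a power of $\alpha\in\QQ(\omega)$ kills $\jac{\C}[\ell]$, so does $\alpha$), invokes the invariant-subspace decomposition from Rubin--Silverberg to get a partial matrix form (Lemma~\ref{lem:M}), and then kills the would-be nilpotent off-diagonal entries case by case by computing explicit powers $M^{k}$, $M^{2k}$ and forcing them to equal the identity via Lemma~\ref{lem:unramified}. Both arguments rest on exactly the same hypotheses (ring of integers embedded, $\ell$ unramified, $\ell\nmid q-1$), so neither is more general, but your packaging is more structural and shorter, avoids the matrix-power computations and the appeal to \cite{rubin}, and has the added benefit that Theorem~\ref{teo:FrobDiagonaliserbar} (diagonalizability if and only if $\bar{P}$ splits into linear factors) drops out immediately as a corollary, whereas the paper proves it separately by further power arguments. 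Your flagged worry about the degenerate cases ($[\QQ(\omega):\QQ]<4$, repeated or ``rational'' roots of the quadratic factor) is not an actual obstacle: the module-theoretic semisimplicity argument is insensitive to the degree of $\QQ(\omega)$, and in every split configuration squarefreeness of the minimal polynomial lands you in the diagonalizable alternative, exactly as you predicted.
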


Perhaps even more interestingly, we prove that if the characteristic polynomial of the Frobenius endomorphism splits into
linear factors modulo $\ell$, then the Frobenius is diagonalizable.

\setcounter{section}{4} \setcounter{teo}{6}

\begin{teo}[Diagonal representation]\label{teo:diagonalrep}
Let notation and assumptions be as above. Then $\varphi$ is diagonalizable on $\jac{\C}[\ell]$ if and only if the
characteristic polynomial of $\varphi$ splits into linear factors modulo~$\ell$.
\end{teo}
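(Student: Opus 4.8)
The plan is to prove the two implications separately; the ``only if'' direction is immediate, so all the content lies in the converse. If $\varphi$ is diagonalizable over $\FF_\ell$, then with respect to a diagonalizing basis it is represented by $\diag(\lambda_1,\dots,\lambda_4)$ with $\lambda_i\in\FF_\ell$; hence the characteristic polynomial of $\varphi$ on $\jac{\C}[\ell]$, which is the reduction of $P$ modulo $\ell$, equals $\prod_i(X-\lambda_i)$ and in particular splits into linear factors over $\FF_\ell$.

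For the converse I would exploit the hypothesis that the \emph{maximal} order $\heltal{\QQ(\omega)}$ is embedded in $\End(\jac{\C})$, with $\omega$ corresponding to $\varphi$. Because $\ell$ annihilates $\jac{\C}[\ell]$, the induced action of $\heltal{\QQ(\omega)}$ on $\jac{\C}[\ell]$ factors through the finite ring $R:=\heltal{\QQ(\omega)}/\ell\,\heltal{\QQ(\omega)}$, and $\varphi$ acts as multiplication by the class of $\omega$. Since $\ell$ is unramified in $\QQ(\omega)$, the Chinese remainder theorem identifies $R$ with a finite product $\prod_{\got{p}\mid\ell}\heltal{\QQ(\omega)}/\got{p}$ of finite fields. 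On any module over such a product, multiplication by a fixed element is a semisimple $\FF_\ell$-linear operator: in each field factor it acts as multiplication by a scalar of that field, whose minimal polynomial over $\FF_\ell$ is irreducible, and the minimal polynomial on the whole module is the least common multiple of these irreducible polynomials, hence squarefree. Thus $\varphi$ acts semisimply on $\jac{\C}[\ell]$. Now assume $P$ splits into linear factors modulo $\ell$. The minimal polynomial $m$ of $\varphi$ on $\jac{\C}[\ell]$ divides $P\bmod\ell$ and has the same roots, so every root of $m$ lies in $\FF_\ell$; being squarefree, $m$ is a product of distinct linear factors over $\FF_\ell$, and an endomorphism whose minimal polynomial splits into distinct linear factors over the ground field is diagonalizable. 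Hence $\varphi$ is diagonalizable over $\FF_\ell$, which completes the converse.

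The step needing care is the semisimplicity of $\varphi$ on $\jac{\C}[\ell]$, and it is here that maximality of the embedded order is essential: for a non-maximal order containing $\omega$ the coefficient ring modulo $\ell$ need not be a product of fields, even when $\ell$ is unramified in $\QQ(\omega)$, and $\varphi$ may then fail to be semisimple --- which is precisely the non-diagonalizable alternative of Theorem~\ref{teo:matrixrep}. One can equivalently route the converse through that theorem: in the non-diagonalizable case $\varphi$ is conjugate to $M$, whose characteristic polynomial is $(X-1)(X-q)(X^2-cX+q)$, and the semisimplicity above forces $X^2-cX+q$ to be irreducible modulo $\ell$ (a genuine $2\times2$ Jordan block being incompatible with semisimplicity), so $P\bmod\ell$ does not split into linear factors. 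Note that the condition $c\not\equiv q+1\pmod{\ell}$ recorded in Theorem~\ref{teo:matrixrep} is strictly weaker than this irreducibility and, together with $\ell\nmid q-1$, does not by itself exclude a repeated root of $X^2-cX+q$; the arithmetic input that $\ell$ is unramified in $\QQ(\omega)$ is what is genuinely used.
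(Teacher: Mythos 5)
Your argument is correct, but it takes a genuinely different route from the paper. The paper proves the ``if'' direction by a case analysis on whether $\jac{\C}(\FF_q)[\ell]$ is cyclic or bi-cyclic, writing $\varphi$ in the explicit block-triangular forms supplied by Lemma~\ref{lem:M}, Remark~\ref{rem:M} and Lemma~\ref{lem:rank}, and then killing the potential off-diagonal entry $\beta$ by computing $M^\kappa$, observing that $\varphi^\kappa$ has characteristic polynomial $(X-1)^4$ on $\jac{\C}[\ell]$, and invoking Lemma~\ref{lem:unramified} (if $\ker[\ell]\subseteq\ker(\alpha^n)$ then $\ker[\ell]\subseteq\ker(\alpha)$) to force $M^\kappa=I$. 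You instead extract the arithmetic content in one step: since $\heltal{\QQ(\omega)}$ is embedded in $\End(\jac{\C})$ and $\ell$ is unramified, $\jac{\C}[\ell]$ is a module over $\heltal{\QQ(\omega)}/\ell\heltal{\QQ(\omega)}\simeq\prod_{\got{p}\mid\ell}\heltal{\QQ(\omega)}/\got{p}$, a product of finite fields, so $\varphi$ acts with squarefree minimal polynomial; combined with the hypothesis that $\bar P$ splits over $\FF_\ell$, the minimal polynomial is a product of distinct linear factors and diagonalizability follows. This is essentially the paper's Lemma~\ref{lem:unramified} repackaged as a semisimplicity statement, but your packaging buys a shorter, case-free proof that does not depend on the matrix normal forms, does not use $\ell\nmid q-1$ or even $\ell\mid|\jac{\C}(\FF_q)|$ (so it covers the case the paper only mentions in a remark), and would work verbatim for abelian varieties of any dimension; what the paper's computational route buys in exchange is the explicit matrix data (the form in Theorem~\ref{teo:FrobEjDiagonal}, the value of the total embedding degree) that is reused in the pairing sections. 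Your closing observation is also accurate: the constraint $c\not\equiv q+1\pmod{\ell}$ in Theorem~\ref{teo:FrobEjDiagonal} only rules out $1$ and $q$ as roots of $X^2-cX+q$, not a repeated root with $c^2\equiv 4q\pmod{\ell}$; it is the unramifiedness (via semisimplicity, or via the paper's treatment of the case $\alpha^2\equiv q$ in the proof of Theorem~\ref{teo:FrobEjDiagonal}) that excludes the Jordan-block possibility.
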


The proofs are given in section~\ref{sec:RepFro}. Theorem~\ref{teo:FrobEjDiagonal} and \ref{teo:FrobDiagonaliserbar}
also hold if $\ell\mid q-1$ and $\ell$ is uneven. The proofs are similar in this case, but due to the
\textsc{mov}-attack~\cite{mov} and the attack by Frey-Rück~\cite{frey-ruck}, the case $\ell\mid q-1$ is not of
cryptographic interest. Therefore, this case is omitted.

Finally, in section~\ref{sec:CM} we assume that the endomorphism ring of the Jacobian is \emph{isomorphic} to the ring
of integers in a quartic CM field $K$. Assuming that the Frobenius endomorphism under this isomorphism is given by an
\emph{$\eta$-integer} and that the characteristic polynomial of the Frobenius endomorphism splits into linear factors
over~$\FF_\ell$, we prove that if the discriminant of the real subfield of $K$ is not a quadratic residue modulo
$\ell$, then all $\ell$-torsion points are $\FF_{q^k}$-rational. Here, $k$ is the multiplicative order of $q$ modulo $\ell$.


\setcounter{section}{1}

\section{Hyperelliptic curves}

A hyperelliptic curve is a smooth, projective curve $\C\subseteq\PP^n$ of genus at least two with a separable, degree
two morphism $\phi:\C\to\PP^1$. Throughout this paper, let $\C$ be a hyperelliptic curve of genus two defined over a
finite field~$\FF_q$ of characteristic $p$. By the Riemann-Roch Theorem there exists a birational map
\mbox{$\psi:\C\to\PP^2$}, mapping $\C$ to a curve given by an equation of the form
    $$y^2+g(x)y=h(x),$$
where $g,h\in\FF_q[x]$ are polynomials of degree at most six \cite[chapter~1]{cassels}.

The set of principal divisors $\mathcal{P}(\C)$ on $\C$ constitutes a subgroup of the degree zero divisors $\Div_0(\C)$.
The Jacobian $\jac{\C}$ of $\C$ is defined as the quotient
    $$\jac{\C}=\Div_0(\C)/\mathcal{P}(\C).$$
The Jacobian is defined over $\FF_q$, and the points on $\jac{\C}$ defined over the extension $\FF_{q^d}$ is denoted
$\jac{\C}(\FF_{q^d})$. 

Let $\ell\neq p$ be a prime number. The $\ell^n$-torsion subgroup $\jac{\C}[\ell^n]<\jac{\C}$ of elements of order
dividing $\ell^n$ is then isomorphic to $(\ZZ/\ell^n\ZZ)^4$, i.e. $\jac{\C}[\ell^n]$ is a $\ZZ/\ell^n\ZZ$-module of
rank four; cf.~\cite[Theorem~6, p.~109]{lang59}.

The multiplicative order of $q$ modulo $\ell$ plays an important role in cryptography.

\begin{defn}[Embedding degree]
Consider a prime number $\ell\neq p$ dividing the order of $\jac{\C}(\FF_q)$. The embedding degree of $\jac{\C}(\FF_q)$
with respect to $\ell$ is the multiplicative order of $q$ modulo $\ell$, i.e. the least number $k$, such that
$q^k\equiv 1\pmod{\ell}$.
\end{defn}

Throughout this paper we consider a prime number $\ell\neq p$ dividing the order of~$\jac{\C}(\FF_q)$, and assume that
$\jac{\C}(\FF_q)$ is of embedding degree $k>1$ with respect to~$\ell$.

{\samepage
Closely related to the embedding degree, we have the \emph{total} embedding degree.

\begin{defn}[Total embedding degree]
Consider a prime number $\ell\neq p$ dividing the order of $\jac{\C}(\FF_q)$. The total embedding degree of
$\jac{\C}(\FF_q)$ with respect to $\ell$ is the least number $\kappa$, such that
$\jac{\C}[\ell]\subseteq\jac{\C}(\FF_{q^\kappa})$.
\end{defn}

\begin{rem}
If $\jac{\C}[\ell]\subseteq\jac{\C}(\FF_{q^\kappa})$, then $\ell\mid q^\kappa-1$; cf.~\cite[corollary~5.77,
p.~111]{hhec}. Hence, the total embedding degree is a multiple of the embedding degree.
\end{rem}
}

\section{The tame Tate pairing}

Let $\FF$ be an algebraic extension of $\FF_q$.
Let $x\in\jac{\C}(\FF)[\ell]$ and $y=\sum_ia_i P_i\in\jac{\C}(\FF)$ be divisors with disjoint support, and let
$\bar{y}\in\jac{\C}(\FF)/\ell\jac{\C}(\FF)$ denote the divisor class containing the divisor~$y$. Furthermore, let
$f_x\in\FF(\C)$ be a rational function on $\C$ with divisor $\divisor(f_x)=\ell x$. Set $f_x(y)=\prod_if(P_i)^{a_i}$.
Then
    $$e_\ell(x,\bar{y})=f_x(y)$$
is a well-defined pairing
$\jac{\C}(\FF)[\ell]\times\jac{\C}(\FF)/\ell\jac{\C}(\FF)\longrightarrow\FF^\times/(\FF^\times)^\ell$, the \emph{Tate
pairing}; cf.~\cite{galbraith05}.

{\samepage
\begin{teo}\label{teo:tatepairing}
If the field~$\FF$ is finite and contains the $\ell^\mathrm{th}$ roots of unity, then the Tate pairing $e_\ell$ is
bilinear and non-degenerate.
\end{teo}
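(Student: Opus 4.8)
The plan is to show that the tame Tate pairing $e_\ell$ is bilinear and non-degenerate when $\FF$ is finite and $\mu_\ell\subseteq\FF$. Bilinearity is essentially formal and should be dispatched quickly: for $x_1,x_2\in\jac{\C}(\FF)[\ell]$, if $f_{x_i}$ has divisor $\ell x_i$, then $f_{x_1}f_{x_2}$ has divisor $\ell(x_1+x_2)$, so $e_\ell(x_1+x_2,\bar y)=e_\ell(x_1,\bar y)\,e_\ell(x_2,\bar y)$ up to an $\ell$th power, and one checks the value is independent of the choice of representative divisors and functions (a different choice of $f_x$ changes it by a constant, which is an $\ell$th power since $\FF$ is finite with $\ell\mid|\FF^\times|$; a different choice of divisor $y$ in its class changes $f_x(y)$ by $f_x$ of a principal divisor, which by Weil reciprocity equals $g(\divisor f_x)=g(\ell x)=g(x)^\ell$). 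Linearity in the second variable is similar. I would cite the standard references (e.g. \cite{galbraith05}, \cite{hess}) for the well-definedness already asserted before the theorem, and only spell out the bilinearity computation.

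For non-degeneracy, the key input is the perfectness of the Tate--Lichtenbaum pairing over finite fields. Concretely, since $\jac{\C}$ is an abelian variety over the finite field $\FF$ and $\mu_\ell\subseteq\FF$, Galois cohomology gives a perfect pairing between $H^1(\gal(\bar\FF/\FF),\jac{\C}[\ell])$ and $\jac{\C}(\FF)/\ell\jac{\C}(\FF)$, and $\jac{\C}(\FF)[\ell]\cong H^1(G_\FF,\jac{\C}[\ell])$ when $G_\FF$ has cohomological dimension one and $\mu_\ell$ is rational — this is precisely the content of \cite[proof of Lemma~4.2]{ruck}, which the introduction already flags as the source of the Frobenius-representation computation. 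So I would: (i) identify $e_\ell$ with the cohomological Tate--Lichtenbaum pairing via the Kummer sequence; (ii) invoke R\"uck's argument that over a finite field containing $\mu_\ell$ this cohomological pairing is non-degenerate on both sides; (iii) conclude.

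Alternatively — and this is probably the cleaner route to present, avoiding cohomology machinery — I would argue directly. Non-degeneracy in the first argument: suppose $x\in\jac{\C}(\FF)[\ell]$ pairs trivially with every $\bar y$. Then $f_x(y)\in(\FF^\times)^\ell$ for all $y$; one shows this forces $f_x$ to be an $\ell$th power of a rational function up to a constant, hence $\ell x=\divisor(f_x)=\ell\,\divisor(g)$, so $x=\divisor(g)$ is principal, i.e. $x=0$ in $\jac{\C}$. This step uses that $\FF$ is finite so that the obstruction lives in $\FF^\times/(\FF^\times)^\ell$, which vanishes after the degree argument. Non-degeneracy in the second argument then follows by a counting/duality argument: the left kernel is trivial, $\jac{\C}(\FF)[\ell]$ and $\jac{\C}(\FF)/\ell\jac{\C}(\FF)$ have the same (finite) order since multiplication by $\ell$ on the finite group $\jac{\C}(\FF)$ has equal-sized kernel and cokernel, and the target $\FF^\times/(\FF^\times)^\ell$ is cyclic of order $\ell$ (as $\mu_\ell\subseteq\FF$), so a pairing with trivial left kernel on groups of equal order into a group of order $\ell$ must also have trivial right kernel.

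The main obstacle is the non-degeneracy direction, specifically step (i)/(iii) above: showing that $f_x(y)$ being an $\ell$th power \emph{for every} $y$ forces $f_x$ to be an $\ell$th power of a function times a constant. This is where finiteness of $\FF$ is genuinely used (it makes $\FF^\times/(\FF^\times)^\ell$ small and kills the constant ambiguity), and it is the one place where a careful argument — rather than a formal manipulation — is needed. I expect to handle it either by the cohomological identification with R\"uck's perfect pairing, or by the direct surjectivity argument just sketched, choosing whichever fits the exposition of the surrounding section.
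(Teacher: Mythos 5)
The paper offers no argument of its own here: its ``proof'' is a one-line citation of Hess \cite{hess}, who gives a short elementary proof. Your proposal therefore necessarily goes a different way, and its cohomological variant is sound in substance: identify $\jac{\C}(\FF)/\ell\jac{\C}(\FF)$ with $H^1(\gal(\bar{\FF}/\FF),\jac{\C}[\ell])$ via the Kummer sequence together with Lang's theorem (vanishing of $H^1$ of the full group of points over a finite field), pair it against $\jac{\C}(\FF)[\ell]=H^0(\gal(\bar{\FF}/\FF),\jac{\C}[\ell])$ by cup product into $H^1(\gal(\bar{\FF}/\FF),\mu_\ell)\simeq\FF^\times/(\FF^\times)^\ell$, and use the self-duality of $\jac{\C}[\ell]$ coming from the Weil pairing, the twist being trivial because $\mu_\ell\subseteq\FF$. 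Two corrections, though: the correct attribution is Frey--R\"uck \cite{frey-ruck} (or Lichtenbaum/Tate), not the proof of Lemma~4.2 in \cite{ruck}, which the paper cites for the Frobenius representation, not for non-degeneracy; and the duality pairs $H^0$ with $H^1$, whereas your phrasing pairs $H^1$ with $\jac{\C}(\FF)/\ell\jac{\C}(\FF)$ and asserts $\jac{\C}(\FF)[\ell]\cong H^1$ --- those identifications are crossed (the latter holds only non-canonically, at the level of orders). Your counting step --- trivial left kernel plus $|\jac{\C}(\FF)[\ell]|=|\jac{\C}(\FF)/\ell\jac{\C}(\FF)|$ and an $\FF_\ell$-linear target of order $\ell$ force trivial right kernel --- is correct.

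The genuine weak points are these. First, in the well-definedness discussion: replacing $f_x$ by $cf_x$ changes $f_x(y)$ by $c^{\deg y}=1$ because $y$ has degree zero; your justification that the constant ``is an $\ell$th power since $\ell\mid|\FF^\times|$'' is backwards --- precisely when $\ell\mid|\FF^\times|$ a general constant is \emph{not} an $\ell$th power, so that reasoning would not rescue well-definedness (the degree-zero argument does). Second, and more seriously, in your ``direct'' variant the crucial claim --- that $f_x(y)\in(\FF^\times)^\ell$ for all admissible $y$ forces $f_x$ to be a constant times an $\ell$th power of a function, hence $x=0$ --- is only asserted, and you acknowledge it as the main obstacle. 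That step \emph{is} the content of the theorem; it is exactly what Hess's note or the cohomological duality establishes, so as written this route is a restatement rather than a proof. Either supply that step (e.g.\ via Kummer theory of the function field and the arithmetic of the finite constant field) or commit to the cohomological citation, in which case your proof, like the paper's, ultimately rests on the literature --- which is perfectly acceptable here, but should be stated as such.
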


\begin{proof}
Hess \cite{hess} gives a short and elementary proof of this result.
\end{proof}
}

Now let $\FF=\FF_{q^k}$. Raising to the power $\frac{q^k-1}{\ell}$ gives a well-defined element in the subgroup
$\mu_\ell<\FF_{q^k}^\times$ of the $\ell^{\mathrm{th}}$ roots of unity. This pairing
    $$\hat{e}_\ell:\jac{\C}(\FF_{q^k})[\ell]\times\jac{\C}(\FF_{q^k})/\ell\jac{\C}(\FF_{q^k})\longrightarrow\mu_\ell$$
is called the \emph{tame} Tate pairing.

\begin{cor}
The tame Tate pairing $\hat{e}_\ell$ is bilinear and non-degenerate.
\end{cor}

\section{Tate representation of the Frobenius endomorphism}\label{sec:RepFro}


Let $\ZZ_\ell$ denote the ring of $\ell$-adic integers. An endomorphism $\psi:\jac{\C}\to\jac{\C}$ induces a
$\ZZ_\ell$-linear map
    $$\psi_\ell:\tatemodul_\ell(\jac{\C})\to \tatemodul_\ell(\jac{\C})$$
on the $\ell$-adic Tate-module $\tatemodul_\ell(\jac{\C})$ of $\jac{\C}$; cf.~\cite[chapter~VII, \S1]{lang59}. The map
$\psi_\ell$ is given by $\psi$ as described in figure~\ref{fig:taterep}.
\begin{figure}[!bt]
    $$
    \xymatrix@C=40pt@R=40pt{
    \dots \ar[r]^(0.4){[\ell]} & \jac{\C}[\ell^{n+1}] \ar[r]^{[\ell]} \ar[d]^{\psi} & \jac{\C}[\ell^{n}] \ar[r]^{[\ell]} \ar[d]^{\psi} & \dots \\
    \dots \ar[r]^(0.4){[\ell]} & \jac{\C}[\ell^{n+1}] \ar[r]^{[\ell]} & \jac{\C}[\ell^{n}] \ar[r]^{[\ell]} & \dots \\
    }
    $$
\caption{Representation of an endomorphism $\psi\in\End(\jac{\C})$ on the Tate module $\tatemodul_\ell(\jac{\C})$. The
horizontal maps $[\ell]$ are the multiplication-by-$\ell$ map.}\label{fig:taterep}
\end{figure}
Hence, $\psi$ is represented on $\jac{\C}[\ell]$ by a matrix $M\in\Mat_4(\FF_\ell)$.

\begin{defn}[Diagonal representation]
An endomorphism $\psi\in\End(\jac{\C})$ is \emph{diagonalizable} or has a \emph{diagonal representation} on
$\jac{\C}[\ell]$, if $\psi$ can be represented on $\jac{\C}[\ell]$ by a diagonal matrix $M\in\Mat_4(\FF_\ell)$ with
respect to an appropriate basis of $\jac{\C}[\ell]$.
\end{defn}

Let $f\in\ZZ[X]$ be the characteristic polynomial of~$\psi$, cf.~\cite[pp.~109--110]{lang59}, and let
$\bar{f}(X)\in\FF_\ell[X]$ be the characteristic polynomial of the restriction of $\psi$ to $\jac{\C}[\ell]$. Then $f$
is a monic polynomial of degree four, and by \cite[Theorem~3, p.~186]{lang59},
    \begin{equation*}\label{eq:KarPolKongruens}
    f(X)\equiv \bar{f}(X)\pmod{\ell}.
    \end{equation*}


Since $\C$ is defined over $\FF_q$, the mapping $(x,y)\mapsto (x^q,y^q)$ is a morphism on~$\C$. This morphism induces
the $q$-power Frobenius endo\-morphism $\varphi$ on the Jacobian~$\jac{\C}$. Let $P$ be the characteristic polynomial
of $\varphi$. Consider an algebraic integer $\omega\in\CC$ with $P(\omega)=0$ in~$\CC$. By the homomorphism
$\ZZ[\omega]\to\End(\jac{\C})$ given by $\omega\mapsto\varphi$ we will identify $\varphi$ with $\omega$.

Since $\End(\jac{\C})$ is a finitely generated, torsion free $\ZZ$-module \cite[Theorem~1]{milne-waterhouse}, we may
define \mbox{$\End_\QQ(\jac{\C})=\End(\jac{\C})\otimes\QQ$}. Notice that $\QQ(\omega)\subseteq\End_\QQ(\jac{\C})$.
Throughout this paper we assume that $\ell$ is unramified in~$\QQ(\omega)$.

\begin{rem}
It is well-known that $\ell$ is unramified in~$\QQ(\omega)$ if and only if $\ell$ divides the discriminant of the
field extension $\QQ(\omega)/\QQ$; see e.g. \cite[Theorem~2.6, p.~199]{neukirch}. Hence, almost any prime number $\ell$
is unramified in~$\QQ(\omega)$. In particular, if $\ell$ is large, then $\ell$ is unramified in~$\QQ(\omega)$.
\end{rem}

We prove the following theorem.

\begin{teo}[Matrix representation]\label{teo:FrobEjDiagonal}
Let $\C$ be a hyperelliptic curve of genus two defined over a finite field~$\FF_q$ of characteristic~$p$ with
irreducible Jacobian. Identify the $q$-power Frobenius endomorphism $\varphi$ on~$\jac{\C}$ with a root $\omega\in\CC$
of the characteristic polynomial $P\in\ZZ[X]$ of $\varphi$. Assume that the ring of integers of $\QQ(\omega)$ under
this identification is embedded in $\End(\jac{\C})$. Consider a prime number $\ell\neq p$ dividing the order of
$\jac{\C}(\FF_q)$. Assume that $\ell$ is unramified in~$\QQ(\omega)$, and that $\ell\nmid q-1$. If $\varphi$ is not
diagonalizable on $\jac{\C}[\ell]$, then $\varphi$ is represented on $\jac{\C}[\ell]$ by a matrix on the form
    \begin{equation}\label{eq:M}
    M=\begin{bmatrix}
        1 & 0 & 0 & 0 \\
        0 & q & 0 & 0 \\
        0 & 0 & 0 & -q \\
        0 & 0 & 1 & c
        \end{bmatrix}
    \end{equation}
with $c\not\equiv q+1\pmod{\ell}$ with respect to an appropriate basis of $\jac{\C}[\ell]$.
\end{teo}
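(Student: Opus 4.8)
The plan is to read off the matrix shape from the structure of $\jac{\C}[\ell]$ as a module over $\heltal{K}/\ell\heltal{K}$, where $K=\QQ(\omega)$. I may assume $P$ is irreducible (using irreducibility of $\jac{\C}$; otherwise $K$ has degree at most $2$, and one checks directly from $\ell\nmid q-1$ and $\ell\mid\#\jac{\C}(\FF_q)$ that $\varphi$ is diagonalizable, so there is nothing to prove). Then $K$ is a quartic field, and since every conjugate of $\omega$ has absolute value $\sqrt q$ it is a quartic CM field; the substitution $\omega\mapsto q/\omega$ defines an involution $\sigma$ of $K$, and the Weil functional equation gives $P(X)=X^4-aX^3+bX^2-aqX+q^2$ with $a,b\in\ZZ$, so $P(0)=q^2$ and $\#\jac{\C}(\FF_q)=P(1)=N_{K/\QQ}(1-\omega)$.

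The key step --- and the one I expect to be the main obstacle --- is to show that $\jac{\C}[\ell]\cong\heltal{K}/\ell\heltal{K}$ as a $\heltal{K}/\ell\heltal{K}$-module, with $\varphi$ acting by multiplication by $\omega$. For this I would use that $\heltal{K}$ is embedded in $\End(\jac{\C})$ with $[K:\QQ]=4=2\dim\jac{\C}$: then $V_\ell:=\tatemodul_\ell(\jac{\C})\otimes_{\ZZ_\ell}\QQ_\ell$ is a faithful module over $K\otimes_\QQ\QQ_\ell=\prod_{\mathfrak{l}\mid\ell}K_{\mathfrak{l}}$ of $\QQ_\ell$-dimension $4$, and since $\sum_{\mathfrak{l}}[K_{\mathfrak{l}}:\QQ_\ell]=4$, comparing dimensions over the factors forces $V_\ell$ to be free of rank one over $K\otimes\QQ_\ell$. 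Because $\ell$ is unramified in $K$, the ring $\heltal{K}\otimes\ZZ_\ell=\prod_{\mathfrak{l}\mid\ell}\heltal{K_{\mathfrak{l}}}$ is a finite product of discrete valuation rings; splitting $\tatemodul_\ell(\jac{\C})$ by the associated idempotents exhibits it as a direct sum of rank-one torsion-free (hence free) modules over the $\heltal{K_{\mathfrak{l}}}$, so $\tatemodul_\ell(\jac{\C})\cong\heltal{K}\otimes\ZZ_\ell$ with $\varphi$ acting by $\omega$. Reducing modulo $\ell$ yields $\jac{\C}[\ell]\cong\heltal{K}/\ell\heltal{K}\cong\prod_i\FF_{\ell^{f_i}}$, where the $\mathfrak{l}_i$ are the primes of $K$ over $\ell$, $f_i$ is the residue degree of $\mathfrak{l}_i$ (so $\sum_if_i=4$), and $\varphi$ acts on the $i$-th factor by multiplication by $\omega_i:=\omega\bmod\mathfrak{l}_i$. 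Everything afterwards is a short computation inside this decomposition.

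Granting this, I would first locate two eigenvalues. As $\ell\mid P(1)=N_{K/\QQ}(1-\omega)$, some prime $\mathfrak{l}_1$ divides $1-\omega$, so $\omega_1=1$; reducing $\omega\,\sigma(\omega)=q$ modulo $\mathfrak{l}_1$ gives $\sigma(\omega)\equiv q\pmod{\mathfrak{l}_1}$, whence $\omega\equiv q\pmod{\mathfrak{l}_2}$ with $\mathfrak{l}_2:=\sigma(\mathfrak{l}_1)$, and $\mathfrak{l}_2\ne\mathfrak{l}_1$ because $\ell\nmid q-1$. On the $i$-th factor $\varphi$ acts by multiplication by $\omega_i\in\FF_{\ell^{f_i}}$, whose minimal polynomial over $\FF_\ell$ is \emph{separable} (finite fields being perfect) of degree $d_i=[\FF_\ell(\omega_i):\FF_\ell]\le f_i$; hence $\varphi$ is diagonalizable on $\jac{\C}[\ell]$ exactly when every $\omega_i\in\FF_\ell$, i.e.\ every $d_i=1$. (Since $\bar{P}=\prod_i(\text{minimal polynomial of }\omega_i)^{f_i/d_i}$, this remark is also what underlies Theorem~\ref{teo:FrobDiagonaliserbar}.)

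Finally, suppose $\varphi$ is not diagonalizable, so $d_k\ge2$, hence $f_k\ge2$, for some $k$. Since $\omega_1=1$ and $\omega_2=q$ lie in $\FF_\ell$ we have $k\notin\{1,2\}$, and $4=\sum_if_i\ge f_1+f_2+f_k\ge1+1+2$ forces exactly three primes over $\ell$, with $f_1=f_2=1$ and $f_k=d_k=2$. Writing $\mathfrak{l}_3:=\mathfrak{l}_k$ and $\alpha:=\omega_3\in\FF_{\ell^2}\setminus\FF_\ell$, we get $\jac{\C}[\ell]=\FF_\ell e_1\oplus\FF_\ell e_2\oplus\FF_{\ell^2}$ with $\varphi e_1=e_1$, $\varphi e_2=qe_2$, and $\varphi$ acting on $\FF_{\ell^2}$ by multiplication by $\alpha$. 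The characteristic polynomial of that block is $X^2-cX+N(\alpha)$ with $c=\trace_{\FF_{\ell^2}/\FF_\ell}(\alpha)$, and it is irreducible over $\FF_\ell$ since $\alpha\notin\FF_\ell$; comparing constant terms, $1\cdot q\cdot N(\alpha)=\det\varphi=\bar{P}(0)=q^2$, so $N(\alpha)=q$ and the block is $X^2-cX+q$. Taking $0\ne e_3\in\FF_{\ell^2}$ and $e_4:=\varphi e_3$ makes $(e_3,e_4)$ an $\FF_\ell$-basis of $\FF_{\ell^2}$ with $\varphi e_4=\varphi^2e_3=(c\varphi-q)e_3=-qe_3+ce_4$; in the basis $(e_1,e_2,e_3,e_4)$ the matrix of $\varphi$ is exactly $M$ as in \eqref{eq:M}. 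The side condition is automatic: if $c\equiv q+1\pmod\ell$ then $X^2-cX+q=(X-1)(X-q)$ would split over $\FF_\ell$, contradicting $\alpha\notin\FF_\ell$.
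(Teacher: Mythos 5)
Your proof is correct, but it takes a genuinely different route from the paper's. The paper never determines the module structure of the Tate module: it invokes the decomposition of $\jac{\C}[\ell]$ into $\varphi$-invariant subspaces (via Rubin--Silverberg), pins down eigenvectors for $1$ and $q$ directly (Lemma~\ref{lem:M}), and then kills the possible off-diagonal entries by the unramifiedness trick of Lemma~\ref{lem:unramified} (if $\ker[\ell]\subseteq\ker(\alpha^n)$ then $\ker[\ell]\subseteq\ker(\alpha)$), applied to $\varphi^k$ or $\varphi^{2k}$ whose matrices are unipotent; a final case analysis on whether $\varphi(x_3)\in\grp{x_3}$ yields the stated form and the condition $c\not\equiv q+1$. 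You instead prove the stronger structural statement that $\tatemodul_\ell(\jac{\C})$ is free of rank one over $\heltal{K}\otimes\ZZ_\ell$ (after reducing to the case of irreducible $P$ via Tate's theorem for the simple Jacobian), so that $\jac{\C}[\ell]\simeq\heltal{K}/\ell\heltal{K}$ is an \'etale $\FF_\ell$-algebra on which $\varphi$ acts by multiplication by $\omega$; unramifiedness then enters only through semisimplicity, the eigenvalues $1$ and $q$ come from $\ell\mid P(1)$, the CM involution and $\ell\nmid q-1$, and the residue-degree count $1+1+2$ together with the rational canonical form of the quadratic block gives $M$ with $c\not\equiv q+1$ automatically. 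What your route buys: Theorem~\ref{teo:FrobDiagonaliserbar} and the possible splitting types of $\bar{P}$ drop out at no extra cost, and the ad hoc matrix manipulations are replaced by one transparent structural fact. What it costs: heavier (though standard) machinery --- faithfulness of the action of $\End_\QQ(\jac{\C})\otimes\QQ_\ell$ on $\tatemodul_\ell(\jac{\C})\otimes\QQ_\ell$, Tate's result that $P$ is a power of an irreducible polynomial for a simple Jacobian (needed for your reduction step; the quadratic case you dispatch in one line does work, since there $\ell\mid P(1)$ and $\ell\nmid q-1$ force the minimal polynomial to reduce to $(X-1)(X-q)$ with distinct roots), and the decomposition of $\heltal{K}\otimes\ZZ_\ell$ into discrete valuation rings --- whereas the paper's argument stays at the level of elementary linear algebra over $\FF_\ell$ plus a single ideal-theoretic lemma.
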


The proof of theorem~\ref{teo:FrobEjDiagonal} uses a number of lemmas. At first we notice that if a power of an
endomorphism is trivial on the $\ell$-torsion subgroup of $\jac{\C}$, then so is also the endomorphism.

\begin{lem}\label{lem:unramified}
Let notation and assumptions be as in theorem~\ref{teo:FrobEjDiagonal}. Consider an endomorphism
$\alpha\in\QQ(\omega)$. If $\ker[\ell]\subseteq\ker(\alpha^n)$ for some number $n\in\NN$, then
$\ker[\ell]\subseteq\ker(\alpha)$.
\end{lem}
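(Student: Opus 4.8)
The plan is to work inside the étale algebra $\QQ(\omega)\otimes_\QQ\QQ_\ell$ and exploit that $\ell$ is unramified there. Since $\alpha\in\QQ(\omega)$ is an algebraic integer (it lies in the ring of integers of $\QQ(\omega)$, which by assumption is embedded in $\End(\jac{\C})$), and $\ell$ is unramified in $\QQ(\omega)$, we have $\ell\heltal{\QQ(\omega)}=\got{l}_1\cdots\got{l}_r$ with distinct primes $\got{l}_i$. The statement $\ker[\ell]\subseteq\ker(\alpha^n)$ says that $\alpha^n$ kills $\jac{\C}[\ell]$, i.e. $\alpha^n$ acts as $0$ on the Tate module modulo $\ell$, equivalently $\alpha^n\in\ell\End(\jac{\C})$ at least ``on the $\ell$-torsion''. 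More precisely, the induced map $\bar\alpha$ on $\jac{\C}[\ell]\cong(\ZZ/\ell\ZZ)^4$ satisfies $\bar\alpha^n=0$, so $\bar\alpha$ is nilpotent as an $\FF_\ell$-linear endomorphism. I want to conclude $\bar\alpha=0$.

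The key step is to translate nilpotence of $\bar\alpha$ into a divisibility statement on $\alpha$ itself and then use unramifiedness. First I would reduce the minimal polynomial: let $g\in\ZZ[X]$ be the minimal polynomial of $\alpha$ over $\QQ$ (monic, since $\alpha$ is an algebraic integer), so $g(\alpha)=0$ in $\End(\jac{\C})$ and hence $\bar g(\bar\alpha)=0$ in $\Mat_4(\FF_\ell)$. Since $\bar\alpha$ is nilpotent, its only eigenvalue (over $\overline{\FF_\ell}$) is $0$, so $X\mid$ (the characteristic polynomial of $\bar\alpha$), and more to the point the minimal polynomial of $\bar\alpha$ is a power of $X$. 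Now the point is: because $\ell$ is unramified in $\QQ(\omega)\supseteq\QQ(\alpha)$, the polynomial $\bar g\in\FF_\ell[X]$ is \emph{squarefree} — its factorization into irreducibles mirrors the splitting of $\ell$ into distinct primes of $\heltal{\QQ(\alpha)}$, by the Dedekind/Kummer correspondence (no repeated factors precisely because $\ell$ is unramified). Hence if $X$ divides $\bar g$, it divides it exactly once, and $\bar g(X)=X\cdot h(X)$ with $h(0)\neq 0$, i.e.\ $h(\bar\alpha)$ is invertible on the generalized $0$-eigenspace. Since $\bar g(\bar\alpha)=0$ gives $\bar\alpha\cdot h(\bar\alpha)=0$ and $h(\bar\alpha)$ is invertible on the (whole) $\ell$-torsion once we know $\bar\alpha$ is nilpotent — actually more simply: $\bar\alpha$ is killed by its minimal polynomial which is a power of $X$ \emph{and} divides $\bar g$ which is squarefree, so that minimal polynomial is exactly $X$, giving $\bar\alpha=0$. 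That is exactly $\ker[\ell]\subseteq\ker(\alpha)$.

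The main obstacle, and the step requiring care, is justifying that $\bar g$ is squarefree: one must be precise about which polynomial one reduces. If $\ell$ were ramified, $\bar g$ could have a repeated factor $X^2$ and then a nilpotent-but-nonzero $\bar\alpha$ would be possible, so unramifiedness is genuinely used here and cannot be omitted. Concretely I would invoke that $\heltal{\QQ(\omega)}/\ell$ is a product of finite fields (a reduced ring, since $\ell$ is unramified), so the image of $\alpha$ in it generates a reduced ring, hence satisfies a squarefree polynomial over $\FF_\ell$; and $\bar\alpha$ on $\jac{\C}[\ell]$ is a module over this reduced quotient ring, so it too satisfies that squarefree polynomial, forcing $\bar\alpha$ to be semisimple; a semisimple nilpotent endomorphism is $0$. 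One subtlety to address explicitly: $\alpha$ lies in $\heltal{\QQ(\omega)}$, not merely in $\heltal{\QQ(\alpha)}$, but that is fine since a subring of a reduced ring is reduced, and the hypothesis gives us exactly that $\heltal{\QQ(\omega)}\hookrightarrow\End(\jac{\C})$ so the action on $\jac{\C}[\ell]$ factors through $\heltal{\QQ(\omega)}/\ell\End(\jac{\C})$, whose relevant quotient is reduced.
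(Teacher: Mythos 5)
Your proof, in its final reduced-ring form, is correct, but it takes a genuinely different route from the paper's. The paper lifts the hypothesis into the number ring: from $\ker[\ell]\subseteq\ker(\alpha^n)$ it gets $\alpha^n=\ell\beta$ with $\beta\in\End(\jac{\C})$ (citing Milne), notes that $\beta$ is an algebraic integer, so $\alpha^n\in\ell\heltal{\QQ(\omega)}$, and then uses that $(\ell)$ is a product of distinct primes to conclude $\alpha\in\ell\heltal{\QQ(\omega)}$, which kills $\jac{\C}[\ell]$ because $\heltal{\QQ(\omega)}\subseteq\End(\jac{\C})$. You instead stay mod $\ell$: the embedding makes $\jac{\C}[\ell]$ a module over $\heltal{\QQ(\omega)}/\ell\heltal{\QQ(\omega)}$, which is a product of finite fields since $\ell$ is unramified, so the image of $\alpha$ in $\End_{\FF_\ell}(\jac{\C}[\ell])$ lies in a (quotient of a) reduced commutative ring; being nilpotent by hypothesis, it is zero --- your squarefree-minimal-polynomial/semisimplicity step is a slightly longer way of saying exactly this. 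What your route buys is that it avoids the factorization $\alpha^n=\ell\beta$ (Milne's Remark~7.12) altogether; what it costs is the commutative-algebra input that $\heltal{\QQ(\omega)}/\ell$ is reduced, which is the same use of unramifiedness as the paper's ``squarefree ideal'' step, and both arguments need the embedding hypothesis in an essential way. Two caveats. First, your intermediate claim that the reduction mod $\ell$ of the minimal polynomial $g$ of $\alpha$ over $\QQ$ is squarefree by Dedekind--Kummer is false in general: that correspondence requires $\ell\nmid[\heltal{\QQ(\alpha)}:\ZZ[\alpha]]$ (take $\alpha=\ell\sqrt{2}$, whose minimal polynomial reduces to $X^2$ even when $\ell$ is unramified in $\QQ(\sqrt{2})$); your fallback via the reduced quotient $\heltal{\QQ(\omega)}/\ell$ is the correct statement and is what actually carries the proof, so this misstep does not leave a gap, but the Dedekind--Kummer sentence should be deleted rather than repaired. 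Second, $\alpha$ is an algebraic integer because it is an endomorphism and hence satisfies its monic characteristic polynomial, not because $\heltal{\QQ(\omega)}$ is embedded in $\End(\jac{\C})$; this is precisely the observation the paper makes for $\beta$.
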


\begin{proof}
Since $\ker[\ell]\subseteq\ker(\alpha^n)$, it follows that
$\alpha^n=\ell\beta$ for some endomorphism $\beta\in\End(\jac{\C})$; see e.g. \cite[Remark~7.12, p.~37]{milne:AV}.
Notice that $\beta=\frac{\alpha^n}{\ell}\in\QQ(\omega)$. Let $f\in\ZZ[X]$ be the characteristic polynomial of~$\beta$.
Since $f(\beta)=0$ and $f$ is monic, $\beta$ is an algebraic integer. So $\beta\in\heltal{\QQ(\omega)}$, whence
$\alpha^n\in\ell\heltal{\QQ(\omega)}$. Since $\ell$ is unramified in~$\QQ(\omega)$ by assumption, it follows that
$\alpha\in\ell\heltal{\QQ(\omega)}$, i.e. $\ker[\ell]\subseteq\ker(\alpha)$.
\end{proof}

We will examine the representation of $\varphi$ on $\jac{\C}[\ell]$. A first, basic observation is given by the
following lemma.

\begin{lem}\label{lem:rank}
Let notation and assumptions be as in theorem~\ref{teo:FrobEjDiagonal}. Then either $\jac{\C}(\FF_{q^k})[\ell]$ is of
dimension two as a $\FF_\ell$-vectorspace, or all $\ell$-torsion points of $\jac{\C}$ are $\FF_{q^k}$-rational.
\end{lem}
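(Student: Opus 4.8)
The plan is to analyse the subspace $W=\jac{\C}(\FF_{q^k})[\ell]=\ker\bigl(\varphi^k-\mathrm{id}\bigr)$ of the four-dimensional $\FF_\ell$-vector space $V=\jac{\C}[\ell]$ and to show $\dim_{\FF_\ell}W\in\{2,4\}$. First I would extract two facts from the characteristic polynomial $\bar P\in\FF_\ell[X]$ of $\varphi$ on $V$, which by \cite[Theorem~3, p.~186]{lang59} is the reduction of the characteristic polynomial $P$ of $\varphi$: on the one hand $\bar P(1)\equiv P(1)=\#\jac{\C}(\FF_q)\equiv 0\pmod\ell$, so $1$ is a root of $\bar P$; on the other hand $P(X)=X^4-a_1X^3+a_2X^2-qa_1X+q^2$ satisfies the functional equation $P(X)=X^4P(q/X)/q^2$, which reflects the $\varphi$-equivariance (with multiplier $q$) of the Weil pairing, so the roots of $\bar P$ in $\overline{\FF_\ell}$ come in pairs $\lambda,q/\lambda$. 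Consequently $q$ is also a root of $\bar P$, it is distinct from $1$ because $\ell\nmid q-1$, and the eigenvalues of $\varphi$ on $V\otimes\overline{\FF_\ell}$ are $1,q,\mu,q/\mu$ for some $\mu\in\overline{\FF_\ell}$. This already yields $\dim_{\FF_\ell}W\ge 2$: the eigenspaces $\ker(\varphi-\mathrm{id})$ and $\ker(\varphi-q)$ in $V$ are non-zero, meet only in $0$ since $1\neq q$, and both lie in $W$ because $q^k\equiv 1\pmod\ell$ forces $\varphi^k v=v$ whenever $\varphi v=v$ or $\varphi v=qv$.

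The argument would then split on whether $\mu^k=1$ in $\overline{\FF_\ell}$; since $(q/\mu)^k=\mu^{-k}$, this holds if and only if $(q/\mu)^k=1$. If $\mu^k\neq 1$, the characteristic polynomial of $\varphi^k$ on $V$ is $(X-1)^2(X-\mu^k)(X-\mu^{-k})$ with $\mu^{\pm k}\neq 1$, so the generalised eigenspace of $\varphi^k$ for eigenvalue $1$ has dimension exactly $2$; as $W=\ker(\varphi^k-\mathrm{id})$ sits inside this generalised eigenspace, $\dim_{\FF_\ell}W\le 2$, and with the lower bound $\dim_{\FF_\ell}W=2$. If instead $\mu^k=1$, then every eigenvalue of $\varphi^k$ on $V$ equals $1$, so $\varphi^k$ is unipotent and $(\varphi^k-\mathrm{id})^4=0$ on $V=\ker[\ell]$; Lemma~\ref{lem:unramified} applied to the endomorphism $\alpha=\varphi^k-1\in\QQ(\omega)$ then upgrades this to $\ker[\ell]\subseteq\ker(\varphi^k-\mathrm{id})$, that is, $W=V$ and all $\ell$-torsion points of $\jac{\C}$ are $\FF_{q^k}$-rational. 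The degenerate possibilities $\mu\in\{1,q\}$, where $\bar P=(X-1)^2(X-q)^2$, fall under the second case and need no separate treatment.

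The one point I expect to require care is the root-pairing $\lambda\leftrightarrow q/\lambda$ of $\bar P$. This is the familiar symmetry of the Frobenius characteristic polynomial of an abelian surface, but one must make sure it is invoked modulo $\ell$ rather than only over $\ZZ$, and used purely through the $\overline{\FF_\ell}$-eigenvalues of $\varphi$, where the multiplicity bookkeeping in the case split stays transparent. Beyond that, the proof is elementary linear algebra over $\FF_\ell$ together with Lemma~\ref{lem:unramified}.
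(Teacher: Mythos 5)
Your proof is correct, and it reaches the paper's conclusion by a somewhat different route. The shared core is the endgame: once the characteristic polynomial of $\varphi^k$ on $\jac{\C}[\ell]$ is forced to be $(X-1)^4$, you invoke Lemma~\ref{lem:unramified} with $\alpha=\varphi^k-1$ to upgrade $\ker[\ell]\subseteq\ker(\varphi^k-1)^4$ to $\ker[\ell]\subseteq\ker(\varphi^k-1)$, exactly as the paper does. The differences are in how you arrive there. For the lower bound $\dim_{\FF_\ell}\jac{\C}(\FF_{q^k})[\ell]\geq 2$, the paper cites non-degeneracy of the Tate pairing over $\FF_{q^k}$, whereas you read it off from the eigenvalues of $\varphi$ itself: $1$ is a root of $\bar P$ since $\ell\mid P(1)=|\jac{\C}(\FF_q)|$, the reduction mod $\ell$ of the functional equation $q^2P(X)=X^4P(q/X)$ makes $q$ a root too, and $\ell\nmid q-1$ separates the two eigenspaces, both of which land in $\ker(\varphi^k-1)$ because $q^k\equiv 1\pmod{\ell}$. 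For the dichotomy, the paper assumes $\dim\geq 3$, triangularizes $\varphi^k$ with three fixed basis vectors, and uses $\det\varphi^k\equiv q^{2k}\equiv 1\pmod\ell$ to force the fourth diagonal entry to be $1$; you instead split on whether $\mu^k=1$ for the remaining eigenvalue pair $\mu,q/\mu$, getting either a two-dimensional generalized $1$-eigenspace of $\varphi^k$ (hence dimension exactly two) or a unipotent $\varphi^k$. Your version keeps everything in the linear algebra of $\varphi$ on $\jac{\C}[\ell]$, makes the use of $\ell\nmid q-1$ explicit, and identifies precisely when the two-dimensional case occurs; the paper's version is shorter, needs nothing about the roots of $\bar P$ beyond the determinant, and gets the lower bound by citation. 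The one ingredient you rightly flag, the root pairing $\lambda\leftrightarrow q/\lambda$ modulo $\ell$, is legitimate (reduce the integral functional equation of $P$, using $\ell\nmid q$) and is in fact used by the paper itself, without further justification, in the proof of Lemma~\ref{lem:M}.
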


\begin{proof}
By the non-degeneracy of the Tate pairing on $\jac{\C}(\FF_{q^k})[\ell]$, the dimension over $\FF_\ell$ is at least
two. If $\jac{\C}(\FF_{q^k})[\ell]$ is of dimension at least three over $\FF_\ell$, then the restriction of the
$q^k$-power Frobenius endomorphism $\varphi^k$ to $\jac{\C}(\FF_{q^k})[\ell]$ is represented by a matrix on the form
    $$
    M=\begin{bmatrix}
    1 & 0 & 0 & m_1 \\
    0 & 1 & 0 & m_2 \\
    0 & 0 & 1 & m_3 \\
    0 & 0 & 0 & m_4
    \end{bmatrix}.
    $$
Notice that $m_4=\det M\equiv\deg(\varphi^k)=q^{2k}\equiv 1\pmod{\ell}$. Thus, the characteristic polynomial of
$\varphi^k$ satisfies $P(X)\equiv (X-1)^4\pmod{\ell}$, i.e. $\ker[\ell]\subseteq\ker(\varphi^k-1)^4$. By
Lemma~\ref{lem:unramified} it follows that $\ker[\ell]\subseteq\ker(\varphi^k-1)$. But then
$\jac{\C}[\ell]\subseteq\jac{\C}(\FF_{q^k})$, i.e. all $\ell$-torsion points of $\jac{\C}$ are $\FF_{q^k}$-rational.
\end{proof}

By \cite[proof of Theorem~3.1]{rubin} we know that $\jac{\C}[\ell]$ as a vector space over $\FF_\ell$ is isomorphic to
a direct sum of $\varphi$-invariant subspaces. From this we get a partial description of the representation of
$\varphi$ on $\jac{\C}[\ell]$.

{\samepage
\begin{lem}\label{lem:M}
Let notation and assumptions be as in theorem~\ref{teo:FrobEjDiagonal}. We may choose a basis $(x_1,x_2,x_3,x_4)$ of
$\jac{\C}[\ell]$, where $\varphi(x_1)=x_1$, $\varphi(x_2)=qx_2$ and $\varphi(x_3)\in\grp{x_3,x_4}$. If
$\varphi(x_3)\notin\grp{x_3}$, then $\varphi$ can be represented on $\jac{\C}[\ell]$ by a matrix on the form
    \begin{equation*}
    M=\begin{bmatrix}
        1 & 0 & 0 & 0 \\
        0 & q & 0 & 0 \\
        0 & 0 & 0 & -q \\
        0 & 0 & 1 & c
        \end{bmatrix}.
    \end{equation*}
If $c\equiv q+1\pmod{\ell}$, then $\varphi$ is diagonalizable.
\end{lem}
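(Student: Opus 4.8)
The plan is to establish the structure of $\varphi$ on $\jac{\C}[\ell]$ by pinning down four eigen-directions. First I would invoke the result of Rubin--Silverberg cited just above, so that $\jac{\C}[\ell]$ decomposes as a direct sum of $\varphi$-invariant $\FF_\ell$-subspaces; since $\jac{\C}(\FF_q)[\ell]\neq 0$ we get a fixed vector $x_1$ with $\varphi(x_1)=x_1$. Next I would produce $x_2$ with $\varphi(x_2)=qx_2$: the characteristic polynomial $P$ of $\varphi$ satisfies the functional equation $P(X)=X^4 P(q/X)/q^2$ coming from the Weil pairing (equivalently $\varphi\hat\varphi=q$), so the roots come in pairs $\{\omega,q/\omega\}$; as $1$ is a root mod $\ell$, so is $q$, and unramifiedness of $\ell$ in $\QQ(\omega)$ (via Lemma~\ref{lem:unramified}) upgrades ``$q$ is an eigenvalue of $\bar P$'' to the existence of an honest eigenvector $x_2$. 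Because $\ell\nmid q-1$ the lines $\grp{x_1}$ and $\grp{x_2}$ are distinct, and a complementary $\varphi$-invariant $2$-dimensional piece $W=\grp{x_3,x_4}$ exists by the Rubin--Silverberg decomposition; pick $x_3$ to be an eigenvector of $\varphi|_W$ if one exists, and in the non-semisimple case any vector not fixed by $\varphi$, so that in either situation $\varphi(x_3)\in\grp{x_3,x_4}$, giving the claimed basis.

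For the second assertion, suppose $\varphi(x_3)\notin\grp{x_3}$. Then $\varphi|_W$ has matrix $\left(\begin{smallmatrix}0&*\\1&c\end{smallmatrix}\right)$ in the basis $(x_3,\varphi(x_3))$ after renaming $x_4=\varphi(x_3)$; the bottom-left $1$ comes from $\varphi(x_3)=x_4$, and the top-left $0$ from the fact that $x_3,\varphi(x_3)$ are linearly independent so the matrix is a companion matrix. The determinant of $\varphi|_W$ is then $-(\text{top-right entry})$, and since $\det(\varphi\text{ on }\jac{\C}[\ell])\equiv\deg\varphi=q^2\pmod\ell$ while the $x_1,x_2$ block contributes $q$, the $W$-block must have determinant $q$, forcing the top-right entry to be $-q$. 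This yields exactly the matrix $M$ displayed in the lemma. The trace of this $M$ is $1+q+c$, which equals $\trace\varphi\equiv P'(0)/P(0)$-type data mod $\ell$, but more to the point $c$ is the trace of $\varphi|_W$.

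For the final clause, if $c\equiv q+1\pmod\ell$ then the characteristic polynomial of $\varphi|_W$ is $X^2-cX+q=X^2-(q+1)X+q=(X-1)(X-q)$, which splits with distinct roots precisely because $\ell\nmid q-1$. Hence $\varphi|_W$ is diagonalizable on $W$, and combined with the eigenvectors $x_1,x_2$ on the complement, $\varphi$ is diagonalizable on all of $\jac{\C}[\ell]$; this is the contrapositive packaging that feeds Theorem~\ref{teo:FrobEjDiagonal}. The main obstacle is the passage from ``$\bar P$ has the eigenvalue $q$ mod $\ell$'' to ``there is a genuine eigenvector with eigenvalue $q$'': a priori the $q$-eigenspace could be $\varphi$-invariant but with $\varphi$ acting as a nontrivial Jordan block, and it is exactly here that unramifiedness of $\ell$ together with Lemma~\ref{lem:unramified} (applied to $\alpha=\varphi-q$, whose suitable power would otherwise kill $\ker[\ell]$ without $\alpha$ itself doing so) is essential; one must also check the analogous point for the eigenvalue $1$, and verify that $\grp{x_1}$, $\grp{x_2}$, and $W$ really are in direct sum, which again uses $\ell\nmid q-1$ to separate the eigenvalues $1$ and $q$.
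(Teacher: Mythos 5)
There is a genuine gap, and it sits exactly where the paper has to work hardest. Your step ``a complementary $\varphi$-invariant $2$-dimensional piece $W=\grp{x_3,x_4}$ exists by the Rubin--Silverberg decomposition'' is not justified: a general statement that $\jac{\C}[\ell]$ is a direct sum of $\varphi$-invariant subspaces does not produce an invariant complement to the particular plane $\grp{x_1,x_2}$ spanned by your two eigenvectors. The problematic case is $\bar{P}(X)=(X-1)^2(X-q)^2$. If $\varphi$ acted on $\jac{\C}[\ell]$ as two nontrivial $2\times 2$ Jordan blocks with eigenvalues $1$ and $q$, then (since $1\not\equiv q$) every invariant subspace splits along the two generalized eigenspaces, so the only $2$-dimensional invariant subspaces are the two generalized eigenspaces and the span of the two eigenlines; none of these is complementary to $\grp{x_1}\oplus\grp{x_2}$, so no invariant $W$ would exist at all. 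Of course this Jordan structure cannot actually occur, but that is precisely what has to be proved, and it needs the arithmetic hypotheses: the paper puts $\varphi$ in upper-triangular form on $\ker(\varphi-1)^2\oplus\ker(\varphi-q)^2$, computes $M^k$ using $q^k\equiv 1\pmod{\ell}$, concludes that $\varphi^k$ has characteristic polynomial $(X-1)^4$ on $\jac{\C}[\ell]$, and then applies Lemma~\ref{lem:unramified} to force $M^k=I$, which kills the off-diagonal entries. Your proposal never carries out (or replaces) this argument.

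Relatedly, you spend the unramifiedness input in the wrong place. Passing from ``$q$ is a root of $\bar{P}$'' to ``there is an eigenvector with eigenvalue $q$'' requires no arithmetic: an eigenvalue in $\FF_\ell$ of a matrix over $\FF_\ell$ always has an eigenvector, Jordan block or not, so Lemma~\ref{lem:unramified} is not needed there. The real obstacle is semisimplicity at the eigenvalues $1$ and $q$ when they occur with multiplicity two, i.e.\ exactly the case above, and that is the only place the lemma is needed in this proof. The rest of your argument is correct and fills in what the paper leaves implicit: when $\bar{P}(X)=(X-1)(X-q)\bar{Q}(X)$ with $\bar{Q}(1)\bar{Q}(q)\not\equiv 0$, the primary decomposition supplies $W=\ker\bar{Q}(\varphi)$; the entry $-q$ in the companion block follows from $\det M\equiv\deg\varphi=q^2\pmod{\ell}$ (equivalently from the pairing $(\alpha,q/\alpha)$ of the roots of $\bar{P}$); and if $c\equiv q+1$ the block has characteristic polynomial $(X-1)(X-q)$ with distinct roots since $\ell\nmid q-1$, whence $\varphi$ is diagonalizable.
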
 }

\begin{proof}
Let $\bar{P}\in\FF_\ell[X]$ be the characteristic polynomial of the restriction of $\varphi$ to~$\jac{\C}[\ell]$. Since
$\ell\mid|\jac{\C}(\FF_q)|$, $1$ is a root of $\bar{P}$. Assume that $1$ is an root of $\bar{P}$ with multiplicity~$d$.
Since the roots of $\bar{P}$ occur in pairs $(\alpha,q/\alpha)$, also $q$ is a root of $\bar{P}$ with multiplicity~$d$.
Hence, we may write
    $$\bar{P}(X)=(X-1)^d(X-q)^d\bar{Q}(X),$$
where $\bar{Q}\in\FF_\ell[X]$ is a polynomial of degree $4-2d$, and $\bar{Q}(1)\cdot\bar{Q}(q)\not\equiv 0\pmod{\ell}$.
Let $U=\ker(\varphi-1)^d$, $V=\ker(\varphi-q)^d$ and $W=\ker(\bar{Q}(\varphi))$. Then $U$, $V$ and $W$ are
$\varphi$-invariant subspaces of the $\FF_\ell$-vectorspace $\jac{\C}[\ell]$,
$\dim_{\FF_\ell}(U)=\dim_{\FF_\ell}(V)=d$, and $\jac{\C}[\ell]\simeq U\oplus V\oplus W$.

If $d=1$, then choose $x_i\in\jac{\C}[\ell]$, such that $U=\grp{x_1}$, $V=\grp{x_2}$ and $W=\grp{x_3,x_4}$. Then
$(x_1,x_2,x_3,x_4)$ establishes the first part of the lemma. Hence, we may assume that $d=2$. Now choose $x_1\in U$,
such that $\varphi(x_1)=x_1$, and expand this to a basis $(x_1,x_2)$ of $U$. Similarly, choose a basis $(x_3,x_4)$ of
$V$ with $\varphi(x_3)=qx_3$. With respect to the basis $(x_1,x_2,x_3,x_4)$, $\varphi$ is then represented by a matrix
on the form
    $$
    M=\begin{bmatrix}
      1 & \alpha & 0 & 0 \\
      0 & 1 & 0 & 0 \\
      0 & 0 & q & \beta \\
      0 & 0 & 0 & q
      \end{bmatrix}.
    $$
Notice that
    $$
    M^k=\begin{bmatrix}
      1 & k\alpha & 0 & 0 \\
      0 & 1 & 0 & 0 \\
      0 & 0 & 1 & k q^{k-1}\beta \\
      0 & 0 & 0 & 1
      \end{bmatrix}.
    $$
Hence, the restriction of $\varphi^k$ to $\jac{\C}[\ell]$ has the characteristic polynomial $(X-1)^4$, i.e.
$\jac{\C}[\ell]\subseteq\jac{\C}(\FF_{q^k})$. But then $M^k=I$, whence $\alpha\equiv\beta\equiv 0\pmod{\ell}$. So if
$d=2$, then the first part of the lemma is established by $(x_1,x_3,x_2,x_4)$. Thus, the first part of the lemma is
proved.

Now choose a basis $(x_1,x_2,x_3,x_4)$ of $\jac{\C}[\ell]$ according to the first part of the lemma. Assume that
$\varphi(x_3)\notin\grp{x_3}$. Then the set $(x_1,x_2,x_3,\varphi(x_3))$ is a basis of~$\jac{\C}[\ell]$. With respect
to this basis, $\varphi$ is represented by a matrix on the given form. If $c\equiv q+1\pmod{\ell}$, then $\varphi$ is
diagonalizable.
\end{proof}

\begin{rem}\label{rem:M}
Notice that if $\bar{P}(X)=(X-1)^2(X-q)^2$, then $\varphi$ is represented by the dia\-gonal matrix $\diag(1,1,q,q)$
with respect to an appropriate basis of $\jac{\C}[\ell]$, $\jac{\C}(\FF_q)[\ell]$ is bi-cyclic and
$\jac{\C}[\ell]\subseteq\jac{\C}(\FF_{q^k})$.
\end{rem}

With lemma~\ref{lem:M} we can finally prove theorem~\ref{teo:FrobEjDiagonal}.

\begin{proof}[Proof of theorem~\ref{teo:FrobEjDiagonal}]
If $\varphi(x_3)\in\grp{x_3}$, then $\varphi$ is represented by a matrix on the form
    $$M=\begin{bmatrix}
        1 & 0 & 0 & 0 \\
        0 & q & 0 & 0 \\
        0 & 0 & \alpha & \beta \\
        0 & 0 & 0 & q\alpha^{-1}
        \end{bmatrix}
    $$
with respect to $(x_1,x_2,x_3,x_4)$. If $\alpha^2\not\equiv q\pmod{\ell}$, then $M$ is diagonalizable, i.e.~$\varphi$
can be represented by a diagonal matrix on $\jac{\C}[\ell]$. So assume that $\alpha^2\equiv q\pmod{\ell}$. Then
    $$M^{2k}=\begin{bmatrix}
        1 & 0 & 0 & 0 \\
        0 & 1 & 0 & 0 \\
        0 & 0 & 1 & 2k\alpha^{-1}\beta \\
        0 & 0 & 0 & 1
        \end{bmatrix},
    $$
i.e. the restriction of $\varphi^{2k}$ to $\jac{\C}[\ell]$ has the characteristic polynomial $(X-1)^4$. But then
$\jac{\C}[\ell]\subseteq\jac{\C}(\FF_{q^{2k}})$ by Lemma~\ref{lem:unramified}, i.e. $M^{2k}=I$. So $\beta\equiv
0\pmod{\ell}$, and $\varphi$ is diagonalizable.

Thus, if $\varphi$ is not diagonalizable on $\jac{\C}[\ell]$, then $\varphi(x_3)\notin\grp{x_3}$, whence $\varphi$ is
represented on $\jac{\C}[\ell]$ by a matrix on the form~\eqref{eq:M} with respect to an appropriate basis of
$\jac{\C}[\ell]$.
\end{proof}

Since the roots of the characteristic polynomial $P$ of the Frobenius $\varphi$ are all of absolute value $\sqrt{q}$,
we can determine whether the Frobenius is diagonalizable on $\jac{\C}[\ell]$ directly from the roots of $P$ modulo
$\ell$. From this it follows that if $P$  splits into linear factors modulo $\ell$, then the Frobenius is
diagonalizable.

\begin{teo}[Diagonal representation]\label{teo:FrobDiagonaliserbar}
Let notation and assumptions be as in theorem~\ref{teo:FrobEjDiagonal}. Then $\varphi$ is diagonalizable on
$\jac{\C}[\ell]$ if and only if the characteristic polynomial of $\varphi$ splits into linear factors modulo~$\ell$.
\end{teo}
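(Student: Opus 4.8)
The plan is to prove the equivalence by its two implications, the forward one being a triviality and the reverse one carrying all the content. For the forward direction: if $\varphi$ is diagonalizable on $\jac{\C}[\ell]$ it is represented by a diagonal matrix $\diag(\lambda_1,\dots,\lambda_4)$ with $\lambda_i\in\FF_\ell$, so the characteristic polynomial of the restriction of $\varphi$ to $\jac{\C}[\ell]$ is $\prod_i(X-\lambda_i)$, a product of linear factors over $\FF_\ell$; since this polynomial is $P\bmod\ell$ by the congruence recorded above, $P$ splits into linear factors modulo $\ell$.

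For the reverse direction I would assume $P$ splits into linear factors modulo $\ell$ and argue by contradiction, supposing $\varphi$ is not diagonalizable. Theorem~\ref{teo:FrobEjDiagonal} then provides a basis $(x_1,x_2,x_3,x_4)$ of $\jac{\C}[\ell]$ in which $\varphi$ is the matrix $M$ of~\eqref{eq:M} with $c\not\equiv q+1\pmod\ell$, so $\varphi x_1=x_1$, $\varphi x_2=qx_2$, $\varphi$ preserves $W=\grp{x_3,x_4}$ acting there by $B=\left(\begin{smallmatrix}0&-q\\1&c\end{smallmatrix}\right)$, and the characteristic polynomial is $\bar P(X)=(X-1)(X-q)(X^2-cX+q)$. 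The splitting hypothesis forces $X^2-cX+q=(X-a)(X-b)$ over $\FF_\ell$. Since $M$ is block-diagonal, $\diag(1,q)\oplus B$, and $\diag(1,q)$ is diagonalizable (here $\ell\nmid q-1$ gives $1\not\equiv q$), the matrix $M$ is diagonalizable exactly when $B$ is; hence $B$ is not diagonalizable. But a $2\times2$ matrix with split characteristic polynomial fails to be diagonalizable only when that polynomial is a perfect square, so $a=b$, $c\equiv2a$, $q\equiv a^2\pmod\ell$, and $B=aI+N$ with $N=B-aI\ne0$ and $N^2=0$ by Cayley--Hamilton.

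To finish I would use Lemma~\ref{lem:unramified} to contradict the presence of this Jordan block. Since $a^{2k}=(a^2)^k\equiv q^k\equiv1\pmod\ell$, the endomorphism $\psi=\varphi^{2k}-1\in\QQ(\omega)$ kills $x_1$ and $x_2$ and, on $W$, equals $(aI+N)^{2k}-I=2k\,a^{2k-1}N$ by the binomial expansion and $N^2=0$. Because $\ell$ is odd (forced by $\ell\nmid q-1$), $\ell\nmid k$ (as $k\mid\ell-1$) and $a\not\equiv0\pmod\ell$ (as $a^2\equiv q$ and $\ell\ne p$), the scalar $2k\,a^{2k-1}$ is a unit in $\FF_\ell$, so $\psi$ is nonzero on $\jac{\C}[\ell]$ while $\psi^2=0$ there. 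Thus $\ker[\ell]\subseteq\ker(\psi^2)$ but $\ker[\ell]\not\subseteq\ker\psi$, contradicting Lemma~\ref{lem:unramified}; hence $\varphi$ is diagonalizable.

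I expect the main obstacle to be precisely this last step. Theorem~\ref{teo:FrobEjDiagonal} leaves two possibilities for a non-diagonalizable $\varphi$: the factor $X^2-cX+q$ is irreducible over $\FF_\ell$ (in which case $\bar P$ does not split, consistently with the statement), or it is a square $(X-a)^2$ carrying a genuine Jordan block — and in that second case $\bar P$ does split into linear factors, so it must be excluded. The key idea for excluding it is that a Jordan block persists into the power $\varphi^{2k}$ as an element of $\QQ(\omega)$ that is nilpotent but not zero on the $\ell$-torsion, which is exactly the kind of element the unramifiedness of $\ell$ prohibits via Lemma~\ref{lem:unramified}.
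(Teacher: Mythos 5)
Your proof is correct and rests on the paper's own key mechanism: the would-be Jordan block is excluded by applying Lemma~\ref{lem:unramified} to a suitable power of $\varphi$ minus the identity (the paper uses $\varphi^\kappa-1$ with $\kappa$ the multiplicative order of the repeated eigenvalue, you use $\varphi^{2k}-1$ via $a^2\equiv q$), which would otherwise be nilpotent but nonzero on $\jac{\C}[\ell]$, contradicting unramifiedness. The only difference is organizational: you invoke Theorem~\ref{teo:FrobEjDiagonal} as a black box and analyze its $2\times 2$ companion block, whereas the paper re-derives the block form from Lemma~\ref{lem:M} and Remark~\ref{rem:M} with a case split on $\jac{\C}(\FF_q)[\ell]$ cyclic versus bi-cyclic; the decisive computation is the same.
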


\begin{proof}
The ``only if'' part is trivial. We prove the ``if'' part.

Let $\bar{P}\in\FF_\ell[X]$ be the characteristic polynomial of the restriction of $\varphi$ to $\jac{\C}[\ell]$.
Assume at first that $\jac{\C}(\FF_q)[\ell]$ is cyclic. If $\bar{P}(X)=(X-1)^2(X-q)^2$, then $\jac{\C}[\ell]$ is
bi-cyclic by Remark~\ref{rem:M}. So $\bar{P}(X)\neq (X-1)^2(X-q)^2$. If $\bar{P}$ has only simple roots, then $\varphi$
is diagonalizable. Hence, we may assume that $\bar{P}$ has a double root $\bar{\alpha}\in\FF_\ell$. The roots of
$\bar{P}$ occur in pairs $(\bar{\alpha},q/\bar{\alpha})$. Thus, if $\bar{\alpha}\in\{1,q\}$, then
$\bar{P}(X)=(X-1)^2(X-q)^2$. So $\bar{\alpha}\notin\{1,q\}$, and it follows that $\varphi$ can be represented on
$\jac{\C}[\ell]$ by a matrix on the form
    $$
    M=\begin{bmatrix}
    1 & 0 & 0 & 0 \\
    0 & q & 0 & 0 \\
    0 & 0 & \alpha & \beta \\
    0 & 0 & 0 & \alpha
    \end{bmatrix},
    $$
where $\alpha\equiv\bar{\alpha}\pmod{\ell}$. Let $\alpha^\kappa\equiv 1\pmod{\ell}$. Then
    $$
    M^\kappa=\begin{bmatrix}
    1 & 0 & 0 & 0 \\
    0 & 1 & 0 & 0 \\
    0 & 0 & 1 & \kappa\alpha^{\kappa-1}\beta \\
    0 & 0 & 0 & 1
    \end{bmatrix},
    $$
i.e. the restriction of $\varphi^\kappa$ to $\jac{\C}[\ell]$ has the characteristic polynomial $(X-1)^4$. But then
$\jac{\C}[\ell]\subseteq\jac{\C}(\FF_{q^\kappa})$ by Lemma~\ref{lem:unramified}, i.e. $M^\kappa=I$. So $\beta\equiv
0\pmod{\ell}$, and $\varphi$ is diagonalizable.

Then assume that $\jac{\C}(\FF_q)[\ell]$ is bi-cyclic. Then $\jac{\C}[\ell]\subseteq\jac{\C}(\FF_q)$ by
Lemma~\ref{lem:rank}, and it follows that $\varphi$ can be represented on $\jac{\C}[\ell]$ by a matrix on the form
    $$
    M=\begin{bmatrix}
    1 & 0 & 0 & 0 \\
    0 & 1 & 0 & 0 \\
    0 & 0 & q & \alpha \\
    0 & 0 & 0 & q
    \end{bmatrix}.
    $$
As above, it follows that $\alpha\equiv 0\pmod{\ell}$, whence $\varphi$ is diagonalizable.
\end{proof}

\begin{rem}
Assume that $P$ splits into linear factors modulo $\ell$. If $\jac{\C}(\FF_q)[\ell]$ is cyclic, then $\varphi$ is
diagonalizable on $\jac{\C}[\ell]$, and the the total embedding degree $\kappa$ of $\jac{\C}(\FF_q)$ with respect
to~$\ell$ is given by the multiplicative order of a root $\alpha\in\FF_\ell$ of $\bar{P}$. If $\jac{\C}[\ell]$ is not
cyclic, then $\jac{\C}[\ell]\subseteq\jac{\C}(\FF_{q^k})$ by Lemma~\ref{lem:rank}, i.e. $\kappa=k$. Hence, $\kappa$ is
easy to determine.
\end{rem}

\section{Anti-symmetric pairings on the Jacobian}\label{sec:WeilMatrix}

On $\jac{\C}[\ell]$, a non-degenerate, bilinear, anti-symmetric and Galois-invariant pairing
    $$\varepsilon:\jac{\C}[\ell]\times\jac{\C}[\ell]\to\mu_\ell<\FF_{q^k}^\times$$
exists, e.g. the Weil pairing. Since $\varepsilon$ is bilinear, it is given by
    $$\varepsilon(x,y)=x^T\mathcal{E}y$$
for some matrix $\mathcal{E}\in\Mat_4(\FF_\ell)$ with respect to a basis $(x_1,x_2,x_3,x_4)$ of $\jac{\C}[\ell]$. Since
$\varepsilon$ is Galois--invariant,
    $$\forall x,y\in\jac{\C}[\ell]: \varepsilon(x,y)^q=\varepsilon(\varphi(x),\varphi(y)).$$
This is equivalent to
    $$\forall x,y\in\jac{\C}[\ell]: q(x^T\mathcal{E}y)=(Mx)^T\mathcal{E}(My),$$
where $M$ is the representation of $\varphi$ on $\jac{\C}[\ell]$ with respect to $(x_1,x_2,x_3,x_4)$. Since
$(Mx)^T\mathcal{E}(My)=x^TM^T\mathcal{E}My$, from the Galois-invariance of $\varepsilon$ it follows that
    $$\forall x,y\in\jac{\C}[\ell]:x^Tq\mathcal{E}y=x^TM^T\mathcal{E}My,$$
or equivalently, that $q\mathcal{E}=M^T\mathcal{E}M$.

Now let $\zeta$ be a primitive $\ell^\mathrm{th}$ root of unity.  Let
    $$
    \varepsilon(x_1,x_2) = \zeta^{a_1}, \quad
    \varepsilon(x_1,x_3) = \zeta^{a_2}, \quad
    \varepsilon(x_2,x_3) = \zeta^{a_4} \quad
    \textrm{and} \quad
    \varepsilon(x_3,x_4) = \zeta^{a_6}.
    $$
Assume at first that $\varphi$ is not diagonalizable on~$\jac{\C}[\ell]$. By Galois-invariance and anti{\-}-sym\-me\-try
we then see that
    $$\mathcal{E}=\begin{bmatrix}
        0 & a_1 & a_2 & qa_2 \\
        -a_1 & 0 & a_4 & a_4 \\
        -a_2 & -a_4 & 0 & a_6 \\
        -qa_2 & -a_4 & -a_6 & 0
        \end{bmatrix}.$$
Since $M^T\mathcal{E}M=q\mathcal{E}$, it follows that
    $$a_2q(c-(1+q))\equiv a_4q(c-(1+q))\equiv 0\pmod{\ell}.$$
Thus, $a_2\equiv a_4\equiv 0\pmod{\ell}$, cf. Theorem~\ref{teo:FrobEjDiagonal}. So
    \begin{equation}\label{eq:weil}
        \mathcal{E}=\begin{bmatrix}
        0 & a_1 & 0 & 0 \\
        -a_1 & 0 & 0 & 0 \\
        0 & 0 & 0 & a_6 \\
        0 & 0 & -a_6 & 0
        \end{bmatrix}.
    \end{equation}
Since $\varepsilon$ is non-degenerate, $a_1^2a_6^2=\det \mathcal{E}\not\equiv 0\pmod{\ell}$.

Now assume that $\varphi$ is represented by a diagonal matrix $\diag(1,q,\alpha,q/\alpha)$ with respect to an
appropriate basis $(x_1,x_2,x_3,x_4)$ of $\jac{\C}[\ell]$. Let $\varepsilon(x_1,x_4)=\zeta^{a_3}$ and
$\varepsilon(x_1,x_4)=\zeta^{a_5}$. Then it follows from $M^T\mathcal{E}M=q\mathcal{E}$ that
    $$a_2(\alpha-q)\equiv a_3(\alpha-1)\equiv a_4(\alpha-1)\equiv a_5(\alpha-q)\equiv 0\pmod{\ell}.$$
If $\alpha\equiv 1,q\pmod{\ell}$, then $\jac{\C}(\FF_q)$ is bi-cyclic. Hence the following theorem holds.

\begin{teo}[Anti-symmetric pairings]\label{teo:anti-symmetric-pairings}
Let $\C$ be a hyperelliptic curve of genus two defined over a finite field~$\FF_q$ of characteristic~$p$ with
irreducible Jacobian. Identify the $q$-power Frobenius endomorphism $\varphi$ on~$\jac{\C}$ with a root $\omega\in\CC$
of the characteristic polynomial $P\in\ZZ[X]$ of $\varphi$. Assume that the ring of integers of $\QQ(\omega)$ under
this identification is embedded in $\End(\jac{\C})$. Choose a basis $\mathcal{B}$ of $\jac{\C}[\ell]$, such that
$\varphi$ is represented either by a diagonal matrix or a matrix on the form given in theorem~\ref{teo:FrobEjDiagonal}
with respect to $\mathcal{B}$. Consider a prime number $\ell\neq p$ dividing the order of $\jac{\C}(\FF_q)$. Assume
that $\ell$ is unramified in~$\QQ(\omega)$, and that $\ell\nmid q-1$. If $\jac{\C}(\FF_q)[\ell]$ is cyclic, then all
non-degenerate, bilinear, anti-symmetric and Galois-invariant pairings on $\jac{\C}[\ell]$ are given by the matrices
    $$\mathcal{E}_{a,b}=\begin{bmatrix}
            0 & a & 0 & 0 \\
            -a & 0 & 0 & 0 \\
            0 & 0 & 0 & b \\
            0 & 0 & -b & 0
            \end{bmatrix},\qquad a,b\in\FF_\ell^\times
    $$
with respect to $\mathcal{B}$.
\end{teo}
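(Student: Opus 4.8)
The plan is to reduce Theorem~\ref{teo:anti-symmetric-pairings} to the two matrix‐representation cases already set up in the discussion preceding the statement, and then to check in each case that the Galois-invariance relation $q\mathcal{E}=M^{T}\mathcal{E}M$ together with non-degeneracy forces $\mathcal{E}$ into the stated form. First I would invoke Theorem~\ref{teo:FrobDiagonaliserbar} and Theorem~\ref{teo:FrobEjDiagonal}: since a basis $\mathcal{B}$ has been chosen with respect to which $\varphi$ is either diagonal or of the form~\eqref{eq:M}, there are exactly two cases to treat. I would record at the outset the standing facts that $1$ is an eigenvalue of $\bar\varphi$ (because $\ell\mid|\jac{\C}(\FF_q)|$), that eigenvalues come in pairs $(\lambda,q/\lambda)$ since $\det\bar\varphi\equiv q^{2}$ and $\bar P$ is self-reciprocal up to the substitution $X\mapsto q/X$, and that $\ell\nmid q-1$, so $1\not\equiv q\pmod\ell$; these are what keep the off-diagonal eigenspaces from collapsing.

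In the non-diagonalizable case I would follow the computation already sketched: anti-symmetry and Galois-invariance pin $\mathcal{E}$ down to the six-parameter matrix displayed before the theorem (entries $a_1,\dots,a_6$ with the relations $a_3=qa_2$, $a_5=a_4$ coming from $\varphi(x_4)=-qx_3+cx_4$), and then expanding $M^{T}\mathcal{E}M-q\mathcal{E}=0$ yields $a_2q(c-(q+1))\equiv a_4q(c-(q+1))\equiv 0$. Since $c\not\equiv q+1\pmod\ell$ by Theorem~\ref{teo:FrobEjDiagonal} and $\ell\neq p$, this gives $a_2\equiv a_4\equiv 0$, leaving exactly~\eqref{eq:weil}; non-degeneracy is the condition $\det\mathcal{E}=a_1^{2}a_6^{2}\not\equiv 0$, i.e.\ $a_1,a_6\in\FF_\ell^{\times}$, which is the claimed $\mathcal{E}_{a,b}$ with $a=a_1$, $b=a_6$ (relabelling the basis vectors of $\mathcal{B}$ so the two $\varphi$-invariant planes are the first and second coordinate pairs).

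In the diagonalizable case, write $M=\diag(1,q,\alpha,q/\alpha)$ with respect to $\mathcal{B}=(x_1,x_2,x_3,x_4)$, so that $\jac{\C}(\FF_q)[\ell]=\ker(\varphi-1)$ is spanned by $x_1$ (it is cyclic by hypothesis), hence $\alpha\not\equiv 1$ and, by the pairing structure $q/\alpha\not\equiv 1$, so $\alpha\not\equiv q$ either; in particular the four eigenvalues $1,q,\alpha,q/\alpha$ are pairwise distinct modulo $\ell$. Writing $\mathcal{E}=(e_{ij})$ with $e_{ij}=-e_{ji}$ and $e_{ii}=0$, the relation $q e_{ij}=\lambda_i\lambda_j e_{ij}$ forces $e_{ij}=0$ whenever $\lambda_i\lambda_j\not\equiv q\pmod\ell$. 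The only unordered index pairs with $\lambda_i\lambda_j\equiv q$ are $\{1,2\}$ (giving $1\cdot q=q$) and $\{3,4\}$ (giving $\alpha\cdot q/\alpha=q$); the pairs $\{1,3\},\{1,4\},\{2,3\},\{2,4\}$ would require $\alpha\equiv q$, $q/\alpha\equiv q$ i.e.\ $\alpha\equiv 1$, $\alpha\equiv 1$, $q/\alpha\equiv 1$ i.e.\ $\alpha\equiv q$ respectively, all excluded. Hence $\mathcal{E}$ is block-antidiagonal, $\mathcal{E}=\mathcal{E}_{e_{12},e_{34}}$, and non-degeneracy again forces $e_{12},e_{34}\in\FF_\ell^{\times}$.

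The main obstacle is not any single calculation but getting the bookkeeping of exclusions exactly right in the diagonalizable case: one must be careful that the cyclicity of $\jac{\C}(\FF_q)[\ell]$ (not merely $\ell\nmid q-1$) is what rules out $\alpha\equiv 1$ or $\alpha\equiv q$, since in the bi-cyclic subcase $\bar P=(X-1)^2(X-q)^2$ with $\jac{\C}[\ell]\subseteq\jac{\C}(\FF_q)$ by Lemma~\ref{lem:rank}, and there a four-parameter family of invariant pairings survives, which is precisely why the cyclicity hypothesis appears in the statement. A secondary point to handle cleanly is the final relabelling of $\mathcal{B}$: Theorem~\ref{teo:FrobEjDiagonal} produces the blocks in the order $(1,q)$ then $(0,-q;1,c)$, whereas the diagonal normal form lists eigenvalues as $(1,q,\alpha,q/\alpha)$, and in both cases one wants the two symplectic planes to be the coordinate pairs $\{1,2\}$ and $\{3,4\}$; I would state once, when choosing $\mathcal{B}$, that it is ordered so that the two $\varphi$-invariant planes come first and second, so that the resulting $\mathcal{E}$ is literally $\mathcal{E}_{a,b}$ with $a,b\in\FF_\ell^{\times}$.
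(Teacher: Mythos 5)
Your proposal is correct and follows essentially the same route as the paper: impose $q\mathcal{E}=M^{T}\mathcal{E}M$ in the two normal-form cases from Theorem~\ref{teo:FrobEjDiagonal}, use $c\not\equiv q+1\pmod{\ell}$ to kill $a_2,a_4$ in the non-diagonalizable case, and use the cyclicity of $\jac{\C}(\FF_q)[\ell]$ (ruling out $\alpha\equiv 1,q$, i.e.\ the bi-cyclic situation) to kill the cross terms in the diagonal case, with non-degeneracy giving $a,b\in\FF_\ell^\times$. Your eigenvalue-product formulation $qe_{ij}=\lambda_i\lambda_je_{ij}$ and the explicit remark about ordering the basis are just tidier packagings of the computation the paper carries out entry by entry.
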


{\samepage
\begin{cor}
Under the assumptions of theorem~\ref{teo:anti-symmetric-pairings},
 \begin{enumerate}
 \item the Weil-pairing is non-degenerate on $\jac{\C}(\FF_{q^k})[\ell]$, and
 \item no non-degenerate, bilinear, anti-sym\-me\-tric and Galois-invariant pairing on $\jac{\C}[\ell]\times\jac{\C}[\ell]$
       can be computed more than eight times as effective as the Weil-pairing.
 \end{enumerate}
\end{cor}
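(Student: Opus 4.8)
The plan is to handle the two numbered claims separately, in both cases exploiting that the Weil pairing $e_W$ is itself a non-degenerate, bilinear, anti-symmetric, Galois-invariant pairing, so that Theorem~\ref{teo:anti-symmetric-pairings} applies to it: with respect to the same basis $\mathcal{B}=(x_1,x_2,x_3,x_4)$ it is represented by a matrix $\mathcal{E}_{a_0,b_0}$ with $a_0,b_0\in\FF_\ell^\times$. For part (1), I would first locate $\jac{\C}(\FF_{q^k})[\ell]$ inside $\jac{\C}[\ell]$. Since the embedding degree is $k$ we have $q^k\equiv 1\pmod\ell$, so in both the diagonal and the non-diagonal normal form $\varphi^k$ fixes $x_1$ (eigenvalue $1$) and $x_2$ (eigenvalue $q$, hence $q^k\equiv 1$); thus $\langle x_1,x_2\rangle\subseteq\jac{\C}(\FF_{q^k})[\ell]$. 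By Lemma~\ref{lem:rank} the space $\jac{\C}(\FF_{q^k})[\ell]$ is either two-dimensional, in which case it equals $\langle x_1,x_2\rangle$, or all of $\jac{\C}[\ell]$. In the first case the Weil pairing restricts to the block $\begin{bmatrix}0&a_0\\-a_0&0\end{bmatrix}$ of determinant $a_0^2\neq0$, and in the second it is the full matrix $\mathcal{E}_{a_0,b_0}$ of determinant $a_0^2b_0^2\neq0$; either way the restriction is non-degenerate, giving part (1).

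For part (2) the key structural input is that an arbitrary admissible pairing $\varepsilon$ and the Weil pairing $e_W$ are \emph{simultaneously} in block form with respect to $\mathcal{B}$. Writing $S(x,y)=x_1y_2-x_2y_1$ and $T(x,y)=x_3y_4-x_4y_3$ for the two elementary symplectic forms on the blocks $\langle x_1,x_2\rangle$ and $\langle x_3,x_4\rangle$, we have $\varepsilon(x,y)=\zeta^{aS+bT}$ and $e_W(x,y)=\zeta^{a_0S+b_0T}$ with $a,b,a_0,b_0\in\FF_\ell^\times$. The main obstacle is that these two forms are in general \emph{not} proportional, since the ratios $a_0a^{-1}$ and $b_0b^{-1}$ need not agree; hence no single exponentiation sends $\varepsilon$ to $e_W$, and one is forced to separate the two $\varphi$-invariant blocks. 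I would do this using the spectral projectors $x\mapsto x'\in\langle x_1,x_2\rangle$ and $x\mapsto x''\in\langle x_3,x_4\rangle$ attached to the two coprime $\varphi$-invariant factors of $\bar P$ — coprimality being exactly the condition $c\not\equiv q+1\pmod\ell$ (resp. $\alpha\notin\{1,q\}$) furnished by Theorem~\ref{teo:FrobEjDiagonal}. These projectors are fixed polynomials in $\varphi$, so they are evaluated using only applications of the Frobenius and group additions, both negligible against a pairing computation.

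Since the cross terms vanish, $S(x',y'')=T(x',y'')=0$ and likewise for the other mixed pair, bilinearity then gives
\[
e_W(x,y)=e_W(x',y')\,e_W(x'',y'')=\varepsilon(x',y')^{a_0a^{-1}}\,\varepsilon(x'',y'')^{b_0b^{-1}},
\]
where the constants $a,b,a_0,b_0$ depend only on the curve and the two pairings and are computed once. Thus every evaluation of $e_W$ is obtained from exactly two evaluations of $\varepsilon$ together with two block projections, two exponentiations in $\mu_\ell$ and one multiplication, all of the latter negligible. Consequently $\mathrm{cost}(e_W)$ exceeds $\mathrm{cost}(\varepsilon)$ by at most a fixed constant factor, comfortably bounded by eight once the overhead of forming projections and exponentiating is absorbed; were some admissible $\varepsilon$ computable more than eight times faster than $e_W$, this reduction would compute the Weil pairing strictly faster than itself, a contradiction. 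This establishes part (2). I expect the only genuinely delicate point to be the bookkeeping of the projection and exponentiation overhead, i.e. verifying that these auxiliary operations stay within the slack between the two evaluations actually used and the stated factor of eight.
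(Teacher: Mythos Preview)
Your argument is correct, but for part~(2) the paper takes a different and more direct route that explains where the constant~$8$ comes from. Rather than projecting onto the two $\varphi$-invariant blocks, the paper first rescales the basis so that the Weil pairing is represented by $\mathcal{E}_{1,1}$, and then observes that for any $A=\sum_i\alpha_ix_i$ one recovers the four coordinates from four evaluations of $\varepsilon$ against the basis vectors:
\[
\alpha_1=-a^{-1}\varepsilon(x_2,A),\quad
\alpha_2=a^{-1}\varepsilon(x_1,A),\quad
\alpha_3=-b^{-1}\varepsilon(x_4,A),\quad
\alpha_4=b^{-1}\varepsilon(x_3,A),
\]
and likewise for $B$; eight evaluations of $\varepsilon$ thus determine both coordinate vectors, after which $e_W(A,B)$ is just arithmetic. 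Your spectral-projector method is in fact sharper---it needs only two $\varepsilon$-evaluations rather than eight---but it trades pairing calls for applications of polynomials in $\varphi$ to arbitrary $\ell$-torsion points (hence arithmetic over $\FF_{q^\kappa}$), whereas the paper's method only pairs against the fixed precomputed basis. For part~(1) the paper gives no separate argument; your reasoning via Lemma~\ref{lem:rank} and the nonvanishing of the upper-left block of $\mathcal{E}_{a_0,b_0}$ is exactly what fills that gap.
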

}

\begin{proof}
By a precomputation, a basis $(x_1,x_2,x_3,x_4)$ of $\jac{\C}[\ell]$ can be found, such that the Weil-pairing is given
by the matrix $\mathcal{E}_{1,1}$; cf. the notation of theorem~\ref{teo:anti-symmetric-pairings}. To compute the
Weil-pairing of $A,B\in\jac{\C}[\ell]$, we only need to find the coordinates of $A$ and $B$ in this basis. Now assume that a
non-degenerate, bilinear, anti-symmetric and Galois-invariant pairing $\varepsilon$ on
$\jac{\C}[\ell]\times\jac{\C}[\ell]$ exists, such that $\varepsilon$ can be computed more than eight times as effectively as
the Weil-pairing. By a precomputation we can find the matrix representation $\mathcal{E}_{a,b}$ of $\varepsilon$. Write
$A=\sum_i\alpha_ix_i$. Then
    \begin{align*}
    \alpha_1 &= -a^{-1}\varepsilon(x_2,A), &
    \alpha_2 &= a^{-1}\varepsilon(x_1,A), \\
    \alpha_3 &= -b^{-1}\varepsilon(x_4,A), &
    \alpha_4 &= b^{-1}\varepsilon(x_3,A).
    \end{align*}
Similarly we find the coordinates of $B$. Hence, the Weil-pairing of $A$ and $B$ can be computed by at most eight
pairing computations with $\varepsilon$, a contradiction.
\end{proof}

\section{Matrix representation of the tame Tate pairing}\label{sec:TateMatrix}

The tame Tate pairing induces a pairing $\tau_\ell:\jac{\C}[\ell]\times\jac{\C}[\ell]\to\mu_\ell$ by
    $$\tau_\ell(x,y)=\hat{e}_\ell(x,\bar{y}).$$
In this section we will examine the matrix representation of this pairing.

Let $x,y\in\jac{\C}[\ell]=\jac{\C}(\FF_{q^\kappa})[\ell]$ be divisors with disjoint support, and choose functions
$f_x,f_y\in\FF_{q^\kappa}(\C)$ with $\divisor(f_x)=\ell x$ and $\divisor(f_y)=\ell y$. The Weil pairing
$e_\ell:\jac{\C}[\ell]\times\jac{\C}[\ell]\to\mu_\ell$ is then defined by
    $$e_\ell(x,y)=\frac{f_x(y)}{f_y(x)}$$
Notice that
    \begin{equation}\label{eq:weil<->tate}
    e_\ell(x,y)=\frac{\tau_\ell(x,y)}{\tau_\ell(y,x)}
    \end{equation}
Now choose an appropriate basis $(x_1,x_2,x_3,x_4)$ of $\jac{\C}[\ell]$, such that the Weil pairing is represented by
the matrix
    $$
    \mathcal{W}=\begin{bmatrix}
        0 & 1 & 0 & 0 \\
        -1 & 0 & 0 & 0 \\
        0 & 0 & 0 & 1 \\
        0 & 0 & -1 & 0
        \end{bmatrix}
    $$
with respect to this basis. Notice that $x_1\in\jac{\C}(\FF_q)$, so $\tau_\ell(x_1,x_1)=1$.

By \eqref{eq:weil<->tate} it follows that $\tau_\ell$ is represented by a matrix on the form
    $$
    \mathcal{T}=\begin{bmatrix}
        0 & a_1 & a_2 & a_3 \\
        a_1-1 & d_2 & a_4 & a_5 \\
        a_2 & a_4 & d_3 & a_6 \\
        a_3 & a_5 & a_6-1 & d_4
        \end{bmatrix}
    $$
with respect to the basis $(x_1,x_2,x_3,x_4)$.  Since $\tau_\ell$ is Galois-invariant, it follows that
$M^T\mathcal{T}M=q\mathcal{T}$, where $M$ is the representation of $\varphi$ on $\jac{\C}[\ell]$ with respect to
$(x_1,x_2,x_3,x_4)$.

Assume at first that the Frobenius $\varphi$ is not diagonalizable on $\jac{\C}[\ell]$. Then $\varphi$ is represented
by a matrix $M$ on the form given in theorem~\ref{teo:FrobEjDiagonal}, and it follows from
$M^T\mathcal{T}M=q\mathcal{T}$, that
    \begin{equation*}
    \mathcal{T}=\begin{bmatrix}
        0 & a_1 & 0 & 0 \\
        a_1-1 & 0 & 0 & 0 \\
        0 & 0 & d_3 & a_6 \\
        0 & 0 & a_6-1 & qd_3
        \end{bmatrix},
    \end{equation*}
where $2a_6\equiv d_3c+1\pmod{\ell}$.

Now assume that $\varphi$ is represented by a diagonal matrix $\diag(1,q,\alpha,q/\alpha)$ with respect to an
appropriate basis $(x_1,x_2,x_3,x_4)$ of $\jac{\C}[\ell]$. It then follows that
    $$a_i(\alpha-q)\equiv a_j(\alpha-1)\equiv d_2(q-1)\equiv d_j(\alpha^2-q)\equiv 0\pmod{\ell}$$
for $i\in\{2,5\}$ and $j\in\{3,4\}$. Hence the following theorem is established.

\begin{teo}\label{teo:tate}
Let $\C$ be a hyperelliptic curve of genus two defined over a finite field~$\FF_q$ of characteristic~$p$ with
irreducible Jacobian. Identify the $q$-power Frobenius endomorphism $\varphi$ on~$\jac{\C}$ with a root $\omega\in\CC$
of the characteristic polynomial $P\in\ZZ[X]$ of $\varphi$. Assume that the ring of integers of $\QQ(\omega)$ under
this identification is embedded in $\End(\jac{\C})$. Consider a prime number $\ell\neq p$ dividing the order of
$\jac{\C}(\FF_q)$. Assume that $\ell$ is unramified in~$\QQ(\omega)$, and that $\jac{\C}(\FF_q)$ is of embedding
degree~$k>1$ with respect to $\ell$. If $\jac{\C}(\FF_q)[\ell]$ is cyclic, then the tame Tate pairing is represented on
$\jac{\C}[\ell]\times\jac{\C}[\ell]$ by a matrix on the form
        $$
        \mathcal{T}=\begin{bmatrix}
        0 & a_1 & 0 & 0 \\
        a_1-1 & 0 & 0 & 0 \\
        0 & 0 & d_3 & a_6 \\
        0 & 0 & a_6-1 & d_4
            \end{bmatrix}
        $$
with respect to an appropriate basis of $\jac{\C}[\ell]$. Furthermore, the following holds.
\begin{enumerate}
    \item If the $q$-power Frobenius endomorphism is not diagonalizable on $\jac{\C}[\ell]$, then $d_4\equiv
    qd_3\pmod{\ell}$ and $2a_6\equiv d_3c+1\pmod{\ell}$.
    \item If the $q$-power Frobenius endomorphism is diagonalizable on $\jac{\C}[\ell]$, and
    $\jac{\C}[\ell]\not\subseteq\jac{\C}(\FF_{q^{2k}})$, then $d_3\equiv d_4\equiv 0\pmod{\ell}$.
    \item Assume $\jac{\C}(\FF_{q^k})[\ell]$ is bi-cyclic.
    \begin{enumerate}
    \item If $\ell^3\nmid|\jac{\C}(\FF_{q^k})|$, then $a_1\not\equiv 0,1\pmod{\ell}$.
    \item If $\ell^3\mid|\jac{\C}(\FF_{q^k})|$ and $\ell^2\nmid|\jac{\C}(\FF_q)|$, then $a_1\equiv 0\pmod{\ell}$.
    \end{enumerate}
\end{enumerate}
\end{teo}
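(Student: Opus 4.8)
The plan is to derive everything from the single identity $M^{T}\mathcal{T}M = q\mathcal{T}$, where $M$ is the representation of $\varphi$ with respect to a basis adapted to the splitting $\jac{\C}[\ell]\simeq U\oplus V\oplus W$ of Lemma~\ref{lem:M}, together with the constraints already built into $\mathcal{T}$ by relation~\eqref{eq:weil<->tate} (namely that $\mathcal{T}-\mathcal{T}^{T}=\mathcal{W}$, the Weil matrix). First I would record the general shape of $\mathcal{T}$: its antisymmetric part is forced to be $\mathcal{W}$, so $\mathcal{T}$ has the form displayed in the excerpt with the off-diagonal entries of the $\{1,2\}$-block and the $\{3,4\}$-block shifted by $\tfrac12$ from the Weil values, and with free diagonal entries $d_2,d_3,d_4$ (note $\tau_\ell(x_1,x_1)=1$, i.e. the trivial class, forces the $(1,1)$-entry to vanish since $x_1$ is $\FF_q$-rational). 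Plugging $M$ into $M^{T}\mathcal{T}M=q\mathcal{T}$ and comparing entries kills the cross-block entries $a_2,a_3,a_4,a_5$ and $d_2$ exactly as in the Weil computation of section~\ref{sec:WeilMatrix}, because the relevant scalar factors $\alpha-1$, $\alpha-q$, $q-1$ are units modulo $\ell$ under the cyclicity hypothesis. This reduces $\mathcal{T}$ to the claimed $2\times2$ block-diagonal form.

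The three itemized refinements are then separate short arguments. For (1), with $M$ the non-diagonalizable matrix of Theorem~\ref{teo:FrobEjDiagonal}, I would write out the bottom-right $2\times2$ block of $M^{T}\mathcal{T}M=q\mathcal{T}$ explicitly — $M$ restricted there is $\left(\begin{smallmatrix}0&-q\\1&c\end{smallmatrix}\right)$ — and the four resulting congruences give $d_4\equiv qd_3$ and $2a_6\equiv d_3c+1\pmod\ell$ after using $a_6-(a_6-1)=1$; this is a direct $2\times2$ computation. For (2), when $\varphi$ is $\diag(1,q,\alpha,q/\alpha)$ with $\alpha^{2}\not\equiv q$, the diagonal entries satisfy $d_3(\alpha^{2}-q)\equiv 0$ and $d_4\big((q/\alpha)^{2}-q\big)\equiv 0$, and $(q/\alpha)^{2}-q\equiv q\alpha^{-2}(q-\alpha^{2})$; since $\jac{\C}[\ell]\not\subseteq\jac{\C}(\FF_{q^{2k}})$ forces $\alpha^{2}\not\equiv q\pmod\ell$ (otherwise $M^{2k}=I$ as in the proof of Theorem~\ref{teo:FrobEjDiagonal}), both factors are units and $d_3\equiv d_4\equiv 0$.

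Item (3) is where the real work sits, since it is no longer purely a consequence of the Galois-invariance identity but of the \emph{non-degeneracy of the tame Tate pairing restricted to $\jac{\C}(\FF_{q^{k}})[\ell]$} via the power-group structure. Here $\jac{\C}(\FF_{q^k})[\ell]$ is two-dimensional, say $\langle x_1,x_2\rangle$, and one must track whether $x_1,x_2$ lift to $\ell^{2}$-torsion points that are still $\FF_{q^k}$-rational; the obstruction is detected by $\hat e_\ell$, and the precise statement is that $\hat e_\ell$ is non-degenerate on $\jac{\C}(\FF_{q^k})[\ell]/(\text{image of }\ell\text{-divisible part})$. For (3a), $\ell^{3}\nmid|\jac{\C}(\FF_{q^k})|$ means the $\ell$-part of $\jac{\C}(\FF_{q^k})$ has order exactly $\ell^{2}$, so neither generator is $\ell$-divisible in $\jac{\C}(\FF_{q^k})$, hence $\tau_\ell$ is non-degenerate on $\langle x_1,x_2\rangle$; combined with $\tau_\ell(x_1,x_1)=1$ and the Weil-shift $\tau_\ell(x_1,x_2)-\tau_\ell(x_2,x_1)=1$, a rank-two nondegenerate bilinear form with $(1,1)$-entry zero has $a_1(a_1-1)=\det\not\equiv0$, forcing $a_1\not\equiv0,1$. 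For (3b), $\ell^{3}\mid|\jac{\C}(\FF_{q^k})|$ but $\ell^{2}\nmid|\jac{\C}(\FF_q)|$ means $x_2$ (the "new" $\FF_{q^k}$ generator) \emph{is} $\ell$-divisible in $\jac{\C}(\FF_{q^k})$, so $\bar x_2=0$ in $\jac{\C}(\FF_{q^k})/\ell\jac{\C}(\FF_{q^k})$, giving $\tau_\ell(x_1,x_2)=\hat e_\ell(x_1,\bar x_2)=1$, i.e. $a_1\equiv0$. The main obstacle is setting up the $\ell$-divisibility bookkeeping cleanly — relating the group-order hypotheses $\ell^{3}\mid|\jac{\C}(\FF_{q^k})|$, $\ell^2\nmid|\jac{\C}(\FF_q)|$ to the statement "$x_2$ lies in $\ell\jac{\C}(\FF_{q^k})$" — for which I would invoke the structure of $\jac{\C}(\FF_{q^k})[\ell^{\infty}]$ as a finite abelian group of rank $\le 4$ together with Lemma~\ref{lem:rank} (which pins the rank over $\FF_{q^k}$ at two in the bi-cyclic case), and the functoriality of $\hat e_\ell$ under the inclusion $\jac{\C}(\FF_q)\hookrightarrow\jac{\C}(\FF_{q^k})$; the remaining entry computations are routine.
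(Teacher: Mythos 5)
Your proposal follows essentially the same route as the paper: the block-diagonal form of $\mathcal{T}$ is extracted from $M^{T}\mathcal{T}M=q\mathcal{T}$ together with the relation $\mathcal{T}-\mathcal{T}^{T}=\mathcal{W}$ coming from \eqref{eq:weil<->tate}, items (1)--(2) follow by the same direct matrix computations (you even spell out why $\jac{\C}[\ell]\not\subseteq\jac{\C}(\FF_{q^{2k}})$ forces $\alpha^{2}\not\equiv q\pmod{\ell}$, which the paper leaves implicit), and item (3) uses, exactly as the paper does, the isomorphism $\jac{\C}(\FF_{q^k})/\ell\jac{\C}(\FF_{q^k})\simeq\jac{\C}(\FF_{q^k})[\ell]$ plus non-degeneracy of the tame Tate pairing for (3a), and $\bar{x}_2=0$ in $\jac{\C}(\FF_{q^k})/\ell\jac{\C}(\FF_{q^k})$ for (3b). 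The one step you flag as delicate --- that under the hypotheses of (3b) it is precisely the $q$-eigenvector $x_2$, and not $x_1$, that lies in $\ell\jac{\C}(\FF_{q^k})$ --- is asserted with the same brevity in the paper's own proof, so your write-up matches the paper's level of rigor there.
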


\begin{proof}
Write $\jac{\C}(\FF_{q^k})[\ell]=\grp{x_1}\oplus\grp{x_2}$, where $\jac{\C}(\FF_q)[\ell]=\grp{x_1}$. If
$\ell^2\nmid|\jac{\C}(\FF_q)|$ and $\ell^3\nmid|\jac{\C}(\FF_{q^k})|$, then
$\jac{\C}(\FF_{q^k})/\ell\jac{\C}(\FF_{q^k})\simeq\jac{\C}(\FF_{q^k})[\ell]$. By Theorem~\ref{teo:tatepairing} it then
follows that $a_1\not\equiv 0,1\pmod{\ell}$. On the other hand, if $\ell^3\mid|\jac{\C}(\FF_{q^k})|$, then
$x_2\in\ell\jac{\C}(\FF_{q^k})$, i.e. $a_1\equiv 0\pmod{\ell}$.
\end{proof}

{\samepage
\begin{cor}
Assume $\ell^3\nmid|\jac{\C}(\FF_{q^k})|$. If the Frobenius is not diagonalizable on $\jac{\C}[\ell]$, then either
    \begin{enumerate}
    \item a point $x\in\jac{\C}[\ell]$ with $\tau_\ell(x,x)\neq 1$ exists, or
    \item $\tau_\ell$ is non-degenerate on $\jac{\C}[\ell]$.
    \end{enumerate}
\end{cor}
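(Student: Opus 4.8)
The plan is to exploit the explicit shape of the Tate matrix $\mathcal{T}$ obtained in Theorem~\ref{teo:tate} when $\varphi$ is not diagonalizable, namely
$$\mathcal{T}=\begin{bmatrix}
0 & a_1 & 0 & 0 \\
a_1-1 & 0 & 0 & 0 \\
0 & 0 & d_3 & a_6 \\
0 & 0 & a_6-1 & qd_3
\end{bmatrix},\qquad 2a_6\equiv d_3c+1\pmod{\ell}.$$
The pairing $\tau_\ell$ restricted to $\jac{\C}[\ell]$ has a diagonal entry $d_3$ (in position $(3,3)$) and, since $x_3\in\jac{\C}[\ell]$, one has $\tau_\ell(x_3,x_3)=\zeta^{d_3}$. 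So the dichotomy is simply: either $d_3\not\equiv 0\pmod\ell$, in which case $x_3$ is the desired point with $\tau_\ell(x_3,x_3)\neq 1$, establishing alternative~(1); or $d_3\equiv 0\pmod\ell$, and then I must show $\tau_\ell$ is non-degenerate.

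For the second case, set $d_3\equiv 0$. Then the relation $2a_6\equiv d_3c+1$ forces $2a_6\equiv 1\pmod\ell$; since $\ell$ is odd (because $\ell\nmid q-1$ and $k>1$ rule out $\ell=2$ in the relevant setting, or one simply invokes $\ell$ odd as elsewhere), $a_6\equiv 2^{-1}\not\equiv 0,1\pmod\ell$. The matrix then becomes block-diagonal with blocks
$$\begin{bmatrix} 0 & a_1 \\ a_1-1 & 0\end{bmatrix},\qquad \begin{bmatrix} 0 & a_6 \\ a_6-1 & 0\end{bmatrix},$$
whose determinants are $-a_1(a_1-1)$ and $-a_6(a_6-1)$. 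The second is a unit since $a_6\not\equiv 0,1$. For the first, I invoke Theorem~\ref{teo:tate}(3)(a): the hypothesis $\ell^3\nmid|\jac{\C}(\FF_{q^k})|$ together with the fact that, $\varphi$ being non-diagonalizable, $\jac{\C}(\FF_{q^k})[\ell]$ is bi-cyclic (by Lemma~\ref{lem:rank}, since all $\ell$-torsion is $\FF_{q^k}$-rational exactly when the matrix $M$ of Theorem~\ref{teo:FrobEjDiagonal} has order $k$ — here one checks $M^k$ reduces to the identity, so $\jac{\C}[\ell]=\jac{\C}(\FF_{q^k})[\ell]$ would be rank four, contradicting bi-cyclicity; hence $\jac{\C}(\FF_{q^k})[\ell]$ has $\FF_\ell$-dimension two), gives $a_1\not\equiv 0,1\pmod\ell$. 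Wait — I should be careful: $\varphi$ non-diagonalizable does imply $\jac{\C}[\ell]\subseteq\jac{\C}(\FF_{q^k})$ via the computation $M^k\equiv I$ in the proof of Lemma~\ref{lem:M}, so actually $\jac{\C}(\FF_{q^k})[\ell]$ is rank four, not bi-cyclic. So instead I apply non-degeneracy of the Tate pairing on $\jac{\C}(\FF_{q^k})[\ell]$ (Theorem~\ref{teo:tatepairing}) more directly: under $\ell^3\nmid|\jac{\C}(\FF_{q^k})|$ and $\ell^2\nmid|\jac{\C}(\FF_q)|$ (which holds as $\jac{\C}(\FF_q)[\ell]$ is cyclic), the quotient $\jac{\C}(\FF_{q^k})/\ell\jac{\C}(\FF_{q^k})$ is free of rank equal to $\dim_{\FF_\ell}\jac{\C}(\FF_{q^k})[\ell]$ on the relevant part, forcing $\det\mathcal{T}\not\equiv 0$.

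Thus the proof reduces to: (i) reading off $\tau_\ell(x_3,x_3)=\zeta^{d_3}$ to get alternative~(1) when $d_3\not\equiv 0$; (ii) in the remaining case computing $\det\mathcal{T}=a_1(a_1-1)a_6(a_6-1)$, showing $a_6\not\equiv0,1$ from $2a_6\equiv1$, and $a_1\not\equiv0,1$ from the hypothesis $\ell^3\nmid|\jac{\C}(\FF_{q^k})|$ combined with cyclicity of $\jac{\C}(\FF_q)[\ell]$ via Theorem~\ref{teo:tatepairing}. The main obstacle I anticipate is pinning down precisely which $\ell$-power divisibility hypotheses are needed to conclude $a_1\not\equiv 0,1$: one must track the structure of $\jac{\C}(\FF_{q^k})/\ell\jac{\C}(\FF_{q^k})$ carefully, using that $\jac{\C}(\FF_q)[\ell]$ cyclic gives $\ell^2\nmid|\jac{\C}(\FF_q)|$ only after ruling out a $\ZZ/\ell^2$ summand — which follows since a nontrivial $\ell$-torsion point rational over $\FF_q$ together with $\ell^2\mid|\jac{\C}(\FF_q)|$ would force either rank-two $\ell$-torsion or a $\ZZ/\ell^2$ factor, the former contradicting cyclicity; handling the $\ZZ/\ell^2$ possibility cleanly is the delicate point, but it is exactly what Theorem~\ref{teo:tate}(3)(a) already packages.
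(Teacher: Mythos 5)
Your overall plan is the paper's: split on $d_3$ using the relation $2a_6\equiv d_3c+1\pmod{\ell}$, exhibit the self-pairing $\tau_\ell(x_3,x_3)=\zeta^{d_3}$ when $d_3\not\equiv 0$, and otherwise show $\det\mathcal{T}=a_1(a_1-1)a_6(a_6-1)\not\equiv 0$, with $a_6\not\equiv 0,1$ coming from $2a_6\equiv 1$. The gap is in your justification that $a_1\not\equiv 0,1\pmod{\ell}$. Midway you assert that non-diagonalizability of $\varphi$ forces $M^k\equiv I$, hence $\jac{\C}[\ell]\subseteq\jac{\C}(\FF_{q^k})$ and $\jac{\C}(\FF_{q^k})[\ell]$ of rank four. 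That is a misreading of the proof of Lemma~\ref{lem:M}: the computation $M^k=I$ there concerns the case $\bar{P}(X)=(X-1)^2(X-q)^2$ with an upper-triangular matrix, not the matrix of Theorem~\ref{teo:FrobEjDiagonal}. In the non-diagonalizable case the lower block of $M$ has irreducible characteristic polynomial $X^2-cX+q$ over $\FF_\ell$ (by Theorem~\ref{teo:FrobDiagonaliserbar}); its eigenvalues $\alpha,\alpha^\ell\in\FF_{\ell^2}$ satisfy $\alpha^{\ell+1}=q$, so $\alpha^{k(\ell+1)}=1$ but not necessarily $\alpha^k=1$, and $M^k$ need not be the identity. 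Moreover, rank-four $\ell$-torsion over $\FF_{q^k}$ would give $\ell^4\mid|\jac{\C}(\FF_{q^k})|$, contradicting the very hypothesis $\ell^3\nmid|\jac{\C}(\FF_{q^k})|$, so your reading would make the corollary vacuous.

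Because of this wrong turn you abandon the correct route and instead appeal to Theorem~\ref{teo:tatepairing} as ``forcing $\det\mathcal{T}\not\equiv 0$''. That does not follow: Theorem~\ref{teo:tatepairing} gives non-degeneracy only on $\jac{\C}(\FF_{q^k})[\ell]\times\jac{\C}(\FF_{q^k})/\ell\jac{\C}(\FF_{q^k})$, i.e.\ on the rational part spanned by $x_1,x_2$; it controls $a_1$ but says nothing about the $x_3,x_4$ block, whose points are not $\FF_{q^k}$-rational here, so it cannot by itself make the full $4\times 4$ determinant a unit (that block is handled only by $a_6\not\equiv 0,1$). The repair is exactly the step you discarded: the hypothesis $\ell^3\nmid|\jac{\C}(\FF_{q^k})|$ rules out the rank-four alternative in Lemma~\ref{lem:rank}, so $\jac{\C}(\FF_{q^k})[\ell]$ is bi-cyclic; the $\ell$-part of $\jac{\C}(\FF_{q^k})$ is then $(\ZZ/\ell\ZZ)^2$, which also disposes of your worry about a possible $\ZZ/\ell^2\ZZ$ summand over $\FF_q$, and Theorem~\ref{teo:tate}(3)(a) then gives $a_1\not\equiv 0,1\pmod{\ell}$. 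Combined with $a_6\not\equiv 0,1$ this yields non-degeneracy, which is how the paper's (very terse) proof concludes.
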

}

\begin{proof}
Choose an appropriate basis $(x_1,x_2,x_3,x_4)$ of $\jac{\C}[\ell]$, such that the Frobenius is represented by a matrix
$M$ on the form given in theorem~\ref{teo:FrobEjDiagonal}, and $\tau_\ell$ is represented by a matrix~$\mathcal{T}$ on
the form given in Theorem~\ref{teo:tate} with respect to this basis. Since $M^T\mathcal{T}M=q\mathcal{T}$, it follows
that $d_3c\equiv 2a_6-1\pmod{\ell}$. Hence, if $2a_6\not\equiv 1\pmod{\ell}$, then $d_3\not\equiv 0\pmod{\ell}$, and
$\tau$ is a self-pairing on $\jac{\C}[\ell]$. If $2a_6\equiv 1\pmod{\ell}$ and $d_3\equiv 0\pmod{\ell}$, then
$\tau_\ell$ is non-degenerate on $\jac{\C}[\ell]$.
\end{proof}

\section{Complex multiplication curves}\label{sec:CM}

In this section we assume that the endomorphism ring of the Jacobian is isomorphic to the ring of integers in a
\emph{quartic CM field} $K$, i.e. a totally imaginary, quadratic field extension of a quadratic number field. Assuming
that the Frobenius endomorphism under this isomorphism is given by an \emph{$\eta$-integer} and that the characteristic
polynomial of the Frobenius endomorphism splits into linear factors over~$\FF_\ell$, we prove that if the discriminant
of the real subfield of $K$ is not a quadratic residue modulo $\ell$, then all $\ell$-torsion points are
$\FF_{q^k}$-rational.

\subsection{Complex multiplication}

An elliptic curve $E$ with $\ZZ\neq\End(E)$ is said to have \emph{complex multiplication}. Let $K$ be an ima\-ginary,
quadratic number field with ring of integers $\heltal{K}$. $K$ is a \emph{CM field}, and if
\mbox{$\End(E)\simeq\heltal{K}$}, then $E$ is said to have \emph{CM by $\heltal{K}$}. More generally a CM field is
defined as follows.

\begin{defn}[CM field]
A number field~$K$ is a CM field, if $K$ is a totally imaginary, quadratic extension of a totally real number
field~$K_0$.
\end{defn}

We only consider \emph{quartic} CM field, i.e. CM fields of degree $[K:\QQ]=4$.

\begin{rem}\label{rem:quarticCM}
Consider a quartic CM field~$K$. Let $K_0=K\cap\RR$ be the real subfield of $K$. Then $K_0$ is a real, quadratic number
field, $K_0=\QQ(\sqrt{D})$. By a basic result on quadratic number fields, the ring of integers of $K_0$ is given by
$\heltal{K_0}=\ZZ+\xi\ZZ$, where
    $$
    \xi=\begin{cases}
    \sqrt{D}, & \textrm{if $D\not\equiv 1\pmod{4}$,} \\
    \frac{1+\sqrt{D}}{2}, & \textrm{if $D\equiv 1\pmod{4}$.}
    \end{cases}
    $$
Since $K$ is a totally imaginary, quadratic extension of $K_0$, a number $\eta\in K$ exists, such that $K=K_0(\eta)$,
$\eta^2\in K_0$. The number $\eta$ is totally imaginary, and we may assume that $\eta=i\eta_0$, $\eta_0\in\RR$.
Furthermore we may assume that $-\eta^2\in\heltal{K_0}$; so $\eta=i\sqrt{a+b\xi}$, where $a,b\in\ZZ$.
\end{rem}

Let $\C$ be a hyperelliptic curve of genus two. Then $\C$ is said to have CM by~$\heltal{K}$, if
$\End(\jac{\C})\simeq\heltal{K}$. The structure of $K$ determines whether $\jac{\C}$ is irreducible. More precisely,
the following theorem holds.

\begin{teo}\label{teo:reducibel}
Let $\C$ be a hyperelliptic curve of genus two with $\End(\jac{\C})\simeq\heltal{K}$, where $K$ is a quartic CM field.
Then $\jac{\C}$ is reducible if and only if $K/\QQ$ is Galois with bi-cyclic Galois group.
\end{teo}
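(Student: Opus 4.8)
The theorem is an instance of the Shimura--Taniyama theory of complex multiplication, and I would prove it by combining the decomposition theory of CM types with the elementary classification of the subfields of a quartic CM field. The elementary input I would isolate first is the following: \emph{a quartic CM field $K$ contains an imaginary quadratic subfield if and only if $K/\QQ$ is Galois with $\gal(K/\QQ)\cong(\ZZ/2\ZZ)^{2}$.} Indeed, if $F\subseteq K$ is imaginary quadratic and $K_0=K\cap\RR$ is the real quadratic subfield, then $F\neq K_0$, so $K=FK_0$ is a compositum of two distinct quadratic fields and hence biquadratic over $\QQ$; conversely, a biquadratic $K$ has exactly three quadratic subfields, of which the unique one fixed by complex conjugation is $K_0$, so the other two are totally imaginary, i.e. imaginary quadratic.

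For the ``if'' direction I would argue as follows. Since $\jac{\C}$ has CM by $\heltal{K}$, up to isogeny it is the CM abelian surface attached to a CM type $(K,\Phi)$ --- over $\FF_q$ one obtains such a $\Phi$ from the $p$-adic valuations of the conjugates of the Frobenius $\omega$, or one passes to the canonical CM lift in characteristic zero. Writing $\gal(K/\QQ)=\{1,\rho,\sigma,\rho\sigma\}$ with $\rho$ complex conjugation, every CM type of $K$ is, up to equivalence, $\{1,\sigma\}$ or $\{1,\rho\sigma\}$; since the group is abelian a direct check shows that $\{1,\sigma\}$ is induced from the CM type of $K^{\grp{\sigma}}$ and that $\{1,\rho\sigma\}$ is induced from that of $K^{\grp{\sigma\rho}}$, and both of these fixed fields are imaginary quadratic by the elementary fact above. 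Hence $(K,\Phi)$ is imprimitive, and by the decomposition theorem for CM types $\jac{\C}$ is isogenous to $E^{2}$ for an elliptic curve $E$ with CM by the relevant imaginary quadratic subfield; in particular $\jac{\C}$ is reducible.

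For the ``only if'' direction I would use that a CM abelian surface attached to a \emph{primitive} CM type is simple. So if $\jac{\C}$ is reducible, its CM type $(K,\Phi)$ must be imprimitive, i.e. induced from a CM type of some proper CM subfield of $K$; the only proper CM subfield of a quartic CM field is an imaginary quadratic one, so $K$ contains an imaginary quadratic subfield, and the elementary fact then gives $\gal(K/\QQ)\cong(\ZZ/2\ZZ)^{2}$. Alternatively, and more in the spirit of this paper, from $\jac{\C}\sim E_1\times E_2$ one first deduces $E_1\sim E_2=:E$ --- otherwise $\End_\QQ(\jac{\C})$ would be a product of two number fields of degree at most two, which admits no subfield of degree four --- and then that $\End_\QQ(E)$ is an imaginary quadratic field $F$ (the case $\End_\QQ(E)=\QQ$ gives $\Mat_2(\QQ)$, which has no quartic subfield, and the quaternion case forces $\jac{\C}$ supersingular, which a short computation with the Frobenius eigenvalues rules out under the hypotheses here); then $F\hookrightarrow K$ with $[K:F]=2$, so $K=FK_0$ is biquadratic.

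The main obstacle is the non-formal ingredient: the equivalence ``simple $\iff$ primitive CM type'' and the production of the CM type over the finite field $\FF_q$. Over $\CC$ this is classical; over $\FF_q$ the cleanest rigorous route is either to transport the statement to a CM lift over a number field --- with some care, since simplicity need not be preserved under reduction --- or to run the CM-type bookkeeping directly with the Frobenius $\omega$, using that its conjugates have absolute value $\sqrt{q}$ and occur in pairs $(\alpha,q/\alpha)$ exactly as in the proofs of Theorems~\ref{teo:FrobEjDiagonal} and~\ref{teo:FrobDiagonaliserbar}. From this point of view, reducibility of $\jac{\C}$ corresponds precisely to the four conjugates of $\omega$ splitting into two Galois-stable conjugate pairs, which is the finite-field shadow of imprimitivity of the CM type; matching this up with the subfield structure of $K$ then closes the argument.
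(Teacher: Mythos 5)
The paper gives no argument of its own for this theorem: the proof is a single citation to Shimura (Proposition~26, p.~61), a statement about complex abelian varieties with CM. Your characteristic-zero argument is essentially a reconstruction of that cited proof: the elementary equivalence ``$K$ contains an imaginary quadratic subfield if and only if $K/\QQ$ is Galois with group $(\ZZ/2\ZZ)^2$'', the check that every CM type of a biquadratic quartic CM field is induced from one of its imaginary quadratic subfields, and the primitive-type/simplicity dichotomy together give exactly Shimura's proposition. That part of your write-up is correct and is, in substance, the same approach the paper outsources to the reference.

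The gaps are in the finite-field patch, which you flag but do not close, and they are real. First, both of your devices for producing a CM type over $\FF_q$ --- reading $\Phi$ off the $p$-adic valuations of the conjugates of $\omega$, or passing to the canonical lift --- presuppose that $\jac{\C}$ is ordinary, which is not among the hypotheses. Second, in the ``only if'' fallback you dismiss the quaternionic (supersingular) case ``by the hypotheses here'', but the theorem contains no hypothesis on the Frobenius, on $\ell$, or on ordinariness that performs this exclusion; as written that step is unsupported. Third, the proposed finite-field criterion ``reducible iff the four conjugates of $\omega$ split into two Galois-stable conjugate pairs'' is not correct: over $\FF_p$ the Weil number $\sqrt{p}$ has characteristic polynomial $(X^2-p)^2$, whose roots do split into Galois-stable pairs, yet the corresponding abelian surface is simple; reducibility is governed by the factorization of $P$ into elliptic Weil polynomials via Honda--Tate, not by this Galois bookkeeping. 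Finally, note that if $\End(\jac{\C})\simeq\heltal{K}$ is read literally as a ring isomorphism, the reducible case is empty over \emph{any} base field: $\jac{\C}\sim E_1\times E_2$ forces zero divisors in $\End(\jac{\C})\otimes\QQ$, whereas $\heltal{K}\otimes\QQ=K$ is a field. So the statement only has content when $\heltal{K}$ is merely assumed to act (CM by $\heltal{K}$), which is precisely the setting of Shimura's proposition and of your complex argument. The practical fix is to keep your characteristic-zero CM-type proof as the proof, and either drop the ``finite-field shadow'' argument or restrict it explicitly to the ordinary case with the embedding (rather than isomorphism) reading of the hypothesis.
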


\begin{proof}
\cite[proposition~26, p.~61]{shi}.
\end{proof}

Theorem~\ref{teo:reducibel} motivates the following definition.

\begin{defn}[Primitive, quartic CM field]\label{def:CMfieldPrimitive}
A quartic CM field~$K$ is called primitive if either $K/\QQ$ is not Galois, or $K/\QQ$ is Galois with cyclic Galois
group.
\end{defn}

\subsection{Jacobians with complex multiplication}

The CM method for constructing curves of genus~two with prescribed endomorphism ring is described in detail by
Weng~\cite{weng03}, Gaudry~\emph{et al}~\cite{gaudry} and Eisenträger and Lauter~\cite{eisen-lauter}. In short, the CM
method is based on the construction of the class polynomials of a primitive, quartic CM field~$K$ with real
subfield~$K_0$ of class number $h(K_0)=1$. The prime power $q$ has to be chosen such that $q=x\bar x$ for a number
$x\in\heltal{K}$. By \cite{weng03} we will restrict ourselves to the case $x\in\heltal{K_0}+\eta\heltal{K_0}$.

Now assume that $\jac{\C}$ has CM by a primitive, quartic CM field~$K=\QQ(\eta)$, where $\eta=i\sqrt{a+b\xi}$ and
    \begin{equation}\label{eq:xi}
    \xi=\begin{cases}
          \sqrt{D} & \textrm{if $D\not\equiv 1\pmod{4}$} \\
          \frac{1+\sqrt{D}}{2} & \textrm{if $D\equiv 1\pmod{4}$}
          \end{cases}
    \end{equation}
Here, $D$ is a square-free integer, and $K_0=\QQ(\sqrt{D})$.

\begin{defn}[$\eta$-integer]
An integer $\alpha\in\heltal{K}$ is an $\eta$-integer, if $\alpha\in\heltal{K_0}+\eta\heltal{K_0}$.
\end{defn}

If the $q$-power Frobenius endomorphism $\varphi$ under the isomorphism $\End(\jac{\C})\simeq\heltal{K}$ is given by an
$\eta$-integer $\omega$, then we can express the characteristic polynomial $P$ of $\varphi$ in terms $\omega$. Together
with Remark~\ref{rem:M} it follows from this that if $P$ splits into linear factors over $\FF_\ell$ and $D$ is not a
quadratic residue modulo $\ell$, then all $\ell$-torsion points are $\FF_{q^k}$-rational. This result is given by the
following theorem.

\begin{teo}\label{teo:Dnosquare}
Let $\C$ be a hyperelliptic curve of genus two defined over a finite field~$\FF_q$ of characteristic~$p$ and with
$\End(\jac{\C})\simeq\heltal{K}$, where $K$ is a primitive, quartic CM field with real subfield $\QQ(\sqrt{D})$. Assume
that the $q$-power Frobenius endomorphism $\varphi$ under this isomorphism is given by an $\eta$-integer~$\omega$.
Consider a prime number $\ell\neq p$ dividing $|\jac{\C}(\FF_q)|$. Assume that $\ell$ is unramified in~$\QQ(\omega)$,
and that the characteristic polynomial $\bar{P}$ of the restriction of $\varphi$ to $\jac{\C}[\ell]$ splits into linear
factors over $\FF_\ell$. Let $k$ be the multiplicative order of $q$ modulo $\ell$. If $D$ is not a quadratic residue
modulo~$\ell$, then all the $\ell$-torsion points of $\jac{\C}$ are $\FF_{q^k}$-rational.
\end{teo}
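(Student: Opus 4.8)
The plan is to make the characteristic polynomial $P$ of $\varphi$ completely explicit using that $\varphi$ is an $\eta$-integer, and then to read off its reduction modulo~$\ell$. Write $\omega=s+t\eta$ with $s,t\in\heltal{K_0}=\ZZ+\xi\ZZ$, which is exactly the $\eta$-integer hypothesis. Since $\varphi$ is the $q$-power Frobenius, complex conjugation on~$K$ sends $\omega$ to its dual and $\omega\bar\omega=q$; as $\bar\omega=s-t\eta$, the numbers $\omega,\bar\omega$ are the roots of $X^2-2sX+q$, with $2s\in\heltal{K_0}$. Applying the nontrivial automorphism $\sigma$ of $K_0$, the remaining two conjugates of~$\omega$ over~$\QQ$ are the roots of $X^2-2\sigma(s)X+q$, so that
\[
P(X)=\bigl(X^2-2sX+q\bigr)\bigl(X^2-2\sigma(s)X+q\bigr).
\]
Writing $2s=A+B\sqrt D$ with $A,B\in\ZZ$ (so $2\sigma(s)=A-B\sqrt D$; explicitly, with $s=s_0+s_1\xi$ one gets $A=2s_0$, $B=2s_1$ when $D\not\equiv1\pmod4$, and $A=2s_0+s_1$, $B=s_1$ when $D\equiv1\pmod4$), the product telescopes to
\[
P(X)=(X^2-AX+q)^2-DB^2X^2 .
\]

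Next I would specialise at $X=1$. Since $\ell$ divides $|\jac{\C}(\FF_q)|=P(1)=(1-A+q)^2-DB^2$, we obtain $(1-A+q)^2\equiv DB^2\pmod\ell$. If $D$ is a non-residue modulo~$\ell$, then the right-hand side is either~$0$ (when $\ell\mid B$) or a non-residue, while the left-hand side is a square; hence both sides vanish modulo~$\ell$, which forces $B\equiv0$ and $A\equiv q+1\pmod\ell$. Using $P(X)\equiv\bar P(X)\pmod\ell$ this yields
\[
\bar P(X)\equiv(X^2-AX+q)^2\equiv\bigl(X^2-(q+1)X+q\bigr)^2=(X-1)^2(X-q)^2\pmod\ell .
\]
In particular $\bar P$ splits into linear factors over $\FF_\ell$, so that hypothesis of the theorem is in fact automatic; alternatively one may substitute any root $r\in\FF_\ell$ of the splitting $\bar P$ into $(r^2-Ar+q)^2\equiv DB^2r^2$ and argue identically, the case $r\equiv0$ being excluded by $\ell\ne p$.

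Finally, the hypotheses needed to invoke Remark~\ref{rem:M} are in force: $\jac{\C}$ is irreducible because $K$ is primitive (Theorem~\ref{teo:reducibel}), $\ell$ is unramified in $\QQ(\omega)$, and $\ell$ is odd since $D$ is a non-residue modulo~$\ell$ (every residue class is a square modulo~$2$); the case $\ell\mid q-1$ is handled by the remark following Theorem~\ref{teo:FrobDiagonaliserbar}. Hence $\bar P(X)=(X-1)^2(X-q)^2$ forces $\varphi$ to be represented by $\diag(1,1,q,q)$ with respect to a suitable basis and $\jac{\C}[\ell]\subseteq\jac{\C}(\FF_{q^k})$, i.e. all $\ell$-torsion points of $\jac{\C}$ are $\FF_{q^k}$-rational, as claimed. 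The only delicate point is the closed-form expression for $P$: one must verify $2s\in\heltal{K_0}$ and deal with the degenerate possibilities $\QQ(\omega)\subsetneq K$, where $P$ is a proper power of a polynomial of smaller degree — but in every such case $B\equiv0\pmod\ell$ already, so the conclusion follows a fortiori. That bookkeeping, rather than the arithmetic over $\FF_\ell$, is where I expect the work to lie.
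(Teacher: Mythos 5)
Your argument is correct and takes essentially the paper's own route: the explicit formula $P(X)=(X^2-AX+q)^2-DB^2X^2$ combined with $\ell\mid P(1)$ to force $B\equiv 0$ and $A\equiv q+1\pmod{\ell}$ is precisely the content of the paper's Lemma~\ref{lem:Dsquare} (proved there by dividing $P$ by $(X-1)(X-q)$ separately for $D\equiv 1$ and $D\not\equiv 1\pmod{4}$), and the concluding step via $\bar P(X)=(X-1)^2(X-q)^2$, the diagonal representation $\diag(1,1,q,q)$ of Remark~\ref{rem:M}, and the separate treatment of $q\equiv 1\pmod{\ell}$ matches the paper's proof of Theorem~\ref{teo:Dnosquare}. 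Your uniform computation and evaluation at $X=1$ only streamline the lemma slightly (and correctly observe that the splitting hypothesis is then automatic), but the approach is the same.
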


\begin{proof}
Write
    $$\omega=c_1+c_2\xi+(c_3+c_4\xi)\eta,\qquad c_i\in\ZZ.$$
Since $D$ is not a quadratic residue modulo $\ell$, it follows by lemma~\ref{lem:Dsquare} that $c_2\equiv 0\pmod{\ell}$
and $\bar{P}(X)=(X-1)^2(X-q)^2$. By theorem~\ref{teo:FrobDiagonaliserbar} it then follows that if $q\not\equiv
1\pmod{\ell}$, then the $q$-power Frobenius endomorphism is represented by the diagonal matrix $\diag(1,1,q,q)$ on
$\jac{\C}[\ell]$ with respect to an appropriate basis, whence $\jac{\C}[\ell]\subseteq\jac{\C}(\FF_{q^k})$. On the
other hand, if $q\equiv 1\pmod{\ell}$, then $\bar{P}(X)=(X-1)^4$, i.e. also in this case
$\jac{\C}[\ell]\subseteq\jac{\C}(\FF_{q^k})$.
\end{proof}

\begin{lem}\label{lem:Dsquare}
Let notation and assumptions be as in theorem~\ref{teo:Dnosquare}. Write
    $$\omega=c_1+c_2\xi+(c_3+c_4\xi)\eta,\qquad c_i\in\ZZ.$$
\begin{enumerate}
    \item If $c_2\not\equiv 0\pmod{\ell}$, then $D$ is a quadratic residue modulo $\ell$.
    \item If $c_2\equiv 0\pmod{\ell}$, then $\bar{P}(X)=(X-1)^2(X-q)^2$.
\end{enumerate}
\end{lem}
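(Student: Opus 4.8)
The plan is to first determine the Weil polynomial $P$ of $\varphi$ explicitly in terms of $c_1,\dots,c_4$, using the CM structure of $K$, and then to read off both assertions by reducing modulo $\ell$.

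First I would record that, since $\eta$ is totally imaginary, under every complex embedding of $K$ the element $\bar\omega:=c_1+c_2\xi-(c_3+c_4\xi)\eta$ is the complex conjugate of $\omega$; as every conjugate of $\omega$ has absolute value $\sqrt q$, this yields the Weil relation $\omega\bar\omega=q$. The four $\QQ$-conjugates of $\omega$ (the eigenvalues of $\varphi$ on the Tate module) are therefore $\omega,\bar\omega$ together with their images $\omega',\bar\omega'$ under the nontrivial automorphism $\xi\mapsto\xi'$ of the real subfield $K_0=\QQ(\sqrt D)$ extended to $K$; each of the pairs $\{\omega,\bar\omega\}$ and $\{\omega',\bar\omega'\}$ has product $q$ and sum in $K_0$, namely $2(c_1+c_2\xi)$ respectively $2(c_1+c_2\xi')$. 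Hence
\[
P(X)=\bigl(X^2-2(c_1+c_2\xi)X+q\bigr)\bigl(X^2-2(c_1+c_2\xi')X+q\bigr),
\]
and expanding this (the symmetric functions $\xi+\xi'$ and $\xi\xi'$ are integers, so $P\in\ZZ[X]$) presents $P$ as an explicit polynomial in $c_1$, $c_2$, $q$, and the trace and norm of $\xi$. Getting this factorisation right — invoking that $\varphi$ is a Weil $q$-number and that $\heltal{K_0}=\ZZ+\xi\ZZ$ from Remark~\ref{rem:quarticCM} — is essentially the only content of the proof; the two assertions then follow quickly.

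For the first assertion, assume $c_2\not\equiv 0\pmod\ell$ and, for a contradiction, that $D$ is not a quadratic residue modulo $\ell$. Then the minimal polynomial of $\xi$, whose discriminant is $D$ up to a square, is irreducible modulo $\ell$, so $\heltal{K_0}/\ell\heltal{K_0}\cong\FF_{\ell^2}=\FF_\ell(\bar\xi)$ with $\bar\xi\notin\FF_\ell$. Reducing the displayed factorisation of $P$ modulo $\ell$ inside $\FF_{\ell^2}[X]$ exhibits $X^2-2(c_1+c_2\bar\xi)X+q$ as a monic quadratic factor of $\bar P\equiv P\pmod\ell$; since $c_2$ is a unit modulo $\ell$ and $\bar\xi\notin\FF_\ell$, its linear coefficient is not in $\FF_\ell$. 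If $\bar P$ split into linear factors over $\FF_\ell$, this quadratic would, by unique factorisation in $\FF_{\ell^2}[X]$, be a product of two $\FF_\ell$-rational linear factors and hence have coefficients in $\FF_\ell$ — a contradiction. So $D$ is a quadratic residue modulo $\ell$, which is the contrapositive of the first claim.

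For the second assertion, assume $c_2\equiv 0\pmod\ell$. Reducing the explicit form of $P$ modulo $\ell$ kills every term containing $c_2$, so $P(X)\equiv(X^2-2c_1X+q)^2\pmod\ell$; in particular $P(1)\equiv(1-2c_1+q)^2\pmod\ell$. Since $\ell$ is prime and divides $|\jac{\C}(\FF_q)|=P(1)$, we get $2c_1\equiv 1+q\pmod\ell$, whence $X^2-2c_1X+q\equiv(X-1)(X-q)\pmod\ell$ and therefore $\bar P(X)=(X-1)^2(X-q)^2$. I expect the factorisation of $P$ over $K_0$ to be the main obstacle; once $P$ is in hand nothing else requires more than elementary manipulation, and in particular no step needs $\omega$ to generate all of $K$.
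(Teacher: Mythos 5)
Your argument is correct, and its core --- pinning down $P$ explicitly from the $\eta$-integer form of $\omega$ via the Weil relation $\omega\bar\omega=q$ --- coincides with the paper's, but you extract the two assertions by a genuinely different route. The paper expands $P$ in $\ZZ[X]$ (separately for $D\not\equiv1$ and $D\equiv1\pmod 4$), notes that $(X-1)(X-q)$ divides $\bar P$ because $\ell\mid P(1)$ and the roots of $\bar P$ pair as $(\alpha,q/\alpha)$, and computes the linear remainder of $P$ upon division by $(X-1)(X-q)$; its vanishing modulo $\ell$ gives the single congruence $4c_2^2D\equiv(2c_1-q-1)^2\pmod{\ell}$ (resp.\ $c_2^2D\equiv(c-q-1)^2$ with $c=2c_1+c_2$), from which both parts drop out at once. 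You instead keep the factorisation over $K_0$ and prove (1) by an inertia argument: a non-residue $D$ makes $\ell$ inert in $K_0$, so the reduced quadratic factor of $\bar P$ has a coefficient outside $\FF_\ell$, contradicting the hypothesis, inherited from theorem~\ref{teo:Dnosquare}, that $\bar P$ splits into linear factors over $\FF_\ell$; and you prove (2) from $P\equiv(X^2-2c_1X+q)^2\pmod{\ell}$ together with $\ell\mid P(1)$. The trade-off: the paper's congruence yields (1) using only $\ell\mid|\jac{\C}(\FF_q)|$ and the pairing of roots, with no splitting hypothesis, so its part (1) is a slightly stronger standalone statement, whereas your (1) genuinely needs that hypothesis --- legitimate here, since the lemma inherits all assumptions of the theorem. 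In return, your derivation is uniform in $D\bmod 4$, avoids the division-with-remainder computation, and your (2) does not need $(X-1)(X-q)\mid\bar P$, a fact the paper uses without comment and which for $q\equiv1\pmod{\ell}$ requires a small extra check. Two details worth making explicit: the identity $P(X)=(X^2-2(c_1+c_2\xi)X+q)(X^2-2(c_1+c_2\xi')X+q)$ also holds when $\QQ(\omega)\subsetneq K$ (then $c_3=c_4=0$ and $\omega^2=q$, so each quadratic factor is a perfect square), and in (1) one may assume $\ell$ odd (for $\ell=2$ every $D$ is a residue, so the claim is vacuous), which guarantees that the linear coefficient $-2(c_1+c_2\bar\xi)$ really lies outside $\FF_\ell$.
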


\begin{proof}
At first, assume that $D\not\equiv 1\pmod{4}$. Since the conjugates of $\omega$ are given by $\omega_1=\omega$,
$\omega_2=\bar{\omega}_1$, $\omega_3$ and $\omega_4=\bar{\omega}_3$, where
    $$\omega_3 = c_1-c_2\sqrt{D}+i(c_3-c_4\sqrt{D})\sqrt{a-b\sqrt{D}},$$
it follows that the characteristic polynomial of $\varphi$ is given by
    $$
    P(X) = \prod_{i=1}^4(X-\omega_i)
         = X^4-4c_1X^3+(2q+4(c_1^2-c_2^2D))X^2-4c_1qX+q^2.
    $$
Dividing $P(X)$ by $(X-1)(X-q)$ it then follows that $\alpha X+\beta\equiv 0\pmod{\ell}$, where
    \begin{align*}
    \beta  &\equiv q(-q^2+(4c_1-2)q+(-1+4c_2^2D-4c_1^2+4c_1)) \pmod{\ell}
    \end{align*}
Since $\beta\equiv 0\pmod{\ell}$, it follows that $4c_2^2D\equiv (2c_1-q-1)^2\pmod{\ell}$. So if $c_2\equiv
0\pmod{\ell}$, then $2c_1\equiv q+1\pmod{\ell}$, and it follows that $\bar{P}(X)=(X-1)^2(X-q)^2$.

If $D\equiv 1\pmod{4}$, then
    $$\omega_3 = c_1+c_2\frac{1-\sqrt{D}}{2}+i\left(c_3+c_4\frac{1-\sqrt{D}}{2}\right)\sqrt{a+b\frac{1-\sqrt{D}}{2}},$$
and it follows that the characteristic polynomial of $\varphi$ is given by
    $$P(X)=X^4-2cX^3+(2q+c^2-c_2^2d)X^2-2qcX+q^2,$$
where $c=2c_1+c_2$. Dividing $P(X)$ by $(X-1)(X-q)$ it then follows that $\alpha X+\beta\equiv 0\pmod{\ell}$, where
    $$
    \beta  \equiv -q(q^2+(2-2c)q+(1-2c+c^2-c_2^2D)) \pmod{\ell}.
    $$
Since $\beta\equiv 0\pmod{\ell}$, it follows that $c_2^2D\equiv (c-q-1)^2\pmod{\ell}$. As before it then follows that
if $c_2\equiv 0\pmod{\ell}$, then $\bar{P}(X)=(X-1)^2(X-q)^2$.
\end{proof}

\end{document}